\DeclareMathAlphabet{\mathpzc}{OT1}{pzc}{m}{it}
\newtheorem{thm}{Theorem}
\newtheorem{thm2}{Theorem}
\newtheorem{cor}[thm]{Corollary}
\newtheorem{lem}[thm]{Lemma}
\newtheorem{prop}[thm]{Proposition}
\theoremstyle{definition}		
\newcommand{\set}[1]{\mathpzc{#1}}	
\newcommand{\itr}{\mathbb Z}			
\newcommand{\rat}{\mathbb Q}			
\newcommand{\nat}{\mathbb N}			
\newcommand{\ff}[1]{{\mathbb F}_{#1}}		
\newcommand{\ffx}[1]{\ff{#1}[X]}		
\DeclareMathOperator{\Nm}{N}
\DeclareMathOperator{\Min}{Min}			
\newcommand{\rhombicfl}[1]{\left\lfloor #1 \right\rfloor_{\diamondsuit}}
\DeclareMathOperator{\MCD}{MCD}			
\newcommand{\qf}[1]{\rat(\sqrt{-#1})}
\DeclareMathOperator{\cost}{cost}		
\newcommand{\X}{\set X}				
\begin{document}
\title[More on\dots]{More on the number of distinct values of a class of functions}

\author[R.S. Coulter]{Robert S. Coulter}

\address[R.S. Coulter]{Department of Mathematical Sciences, University of Delaware,
Newark, DE 19716, United States of America.}

\author[S. Senger]{Steven Senger}

\address[S. Senger]{Department of Mathematics, Missouri State University, MO 65897, United States of America.}

\email[R.S. Coulter]{coulter@udel.edu}
\email[S. Senger]{stevensenger@missouristate.edu (corresponding author)}

\thanks{Orcid IDs: (R.S. Coulter) 0000-0002-1546-8779,
(S. Senger) 0000-0003-2912-4464}

\thanks{The authors gratefully acknowledge a bequest from the Estate of Fransisco Javier ``Pancho" Sayas, which partially supported this research.}

\dedicatory{Dedicated to the memory of Pancho (1968--2019)}

\subjclass[2020]{Primary 03E20, 11T06; Secondary 51E15, 11D59}

\begin{abstract}
In a previous article the authors determined the best-known
upper bound for the cardinality of the
image set for several classes of functions, including planar functions. 
Here, we show that the upper bound cannot be tight for planar functions over
finite fields.
This follows from a more general result proving that the upper bound cannot be 
tight for a much larger class of functions over an abelian group of order
$y^n$ with $n>1$. Moreover, the tightness of the upper bound for the larger
class of functions is equivalent to the existence of planar difference sets.

To obtain better upper bounds, we first completely
resolve an optimization problem involving the partitioning of a number into
triangular parts.
Our solution, which is algorithmic and constructive,
allows us to determine tight upper bounds provided
the relevant parameters are given explicitly.
We also provide a suite of upper bounds which can be applied across
a range of parameters.
These are established via a well-studied Diophantine equation 
and are related to class numbers of quadratic number fields.
\end{abstract}

\maketitle

\section{Introduction and history}

Throughout, we use $A$ to denote a set of cardinality $q$ and $G$ to denote
an arbitrary abelian group of order $q$, written
additively with identity $0$. In order to avoid some trivialities, we assume $q>7$ for all that follows.
We use $G^\star$ to denote the nonzero
elements of the group. We will also have cause to discuss finite fields,
which we denote by $\ff{q}$ with $q=p^n$, $p$ a prime and $n\in\nat$.

Let $f$ be a function on $A$.
Following notation of Carlitz \cite{carlitz55}, we define the following terms.
\begin{itemize}

\item $V(f)$ is the cardinality of the image set of $f$. That is,
$V(f)=\#\{f(x)\,:\, x\in A\}$.

\item $f$ is a {\em permutation} if $V(f)=q$.
Moreover, if $A=\ff{q}$, we talk of a {\em permutation polynomial (PP)} over
$\ff{q}$,
as any function over $\ff{q}$ can be represented by a polynomial in $\ffx{q}$.

\item When $A=G$, for any $a\in G^\star$, the {\em differential operator of $f$ in the
direction of $a$} (or simply non-trivial differential operator) is the function
$\Delta_{f,a}$ defined by $\Delta_{f,a}: x\mapsto f(x+a) - f(x)$.

\item For any integer $k\ge 2$, we define
$N_k(f)$ to be the number of $k$-tuples of distinct inputs with the same
output under $f$. That is,
\[N_k(f) = \#\{(x_1, x_2,\dots x_k)\,:\, x_i\in A \land x_i\ne x_j \land
f(x_1)=f(x_2)=\dots=f(x_k)\}.\]
Note that order matters, so that $N_k(f)$ is necessarily divisible by $k!$.

\end{itemize}
The expected value of $N_2(f)$ for $f\in\ffx{q}$ was shown to be $q-1$ in
\cite{coulter14c}. In fact, this follows at once from the the following short,
but far more general, argument:
Given any function $f$ on $A$, there are
${q \choose k} k!$ different $k$-tuples, $(x_1, x_2, \dots, x_k).$
The probability that $f(x_i)=f(x_j)$ for all relevant $i$ and $j$ is easily
seen to be $q^{-(k-1)}.$ This gives that the expected value of $N_k(f)$ is
${q \choose k} \frac{k!}{q^{k-1}}.$ Clearly, the expected value of $N_2(f)$ is
$q-1$. Note, that the
expected value for $N_k$ for $k\ge 3$ is never an integer. To see this, suppose
otherwise. Then as $\gcd(q,q-1)=1$, we would have
$(q-2)(q-3)\cdots (q-(k-1))/q^{k-2}\in\itr$, but this is a product of $k-2$
rationals all less than 1.

There are several classes of functions we will be particularly
focused on in
this paper. We follow the notation previously established in \cite{coulter14c}.
\begin{itemize}
\item A function $f$ on $G$ is {\em planar} if every non-trivial differential operator of $f$
is a permutation on $G$.
\item $f$ is said to belong to class $C_3(G)$ if every
non-trivial differential operator, $\Delta_{f,a}$ with $a\in G^\star,$ has a
unique zero.
\item A function $f$ on $A$ belongs to class $C_4(A)$ if $N_2(f)=q-1$. 
Note that $C_4$ is empty if $q$ is even.
\end{itemize}
Class $C_3(G)$ is a proper subclass of class $C_4(G)$, and the class of all
planar functions over $G$ is a proper subclass of class $C_3(G)$. These
inclusions were shown in \cite{coulter14c}.

There has been much interest in bounding the size of the image sets of planar
functions. (Further motivation for studying planar functions is given in
\Cref{motivation}.)
In \cite{coulter14c}, the authors provided bounds on the image sets
of functions in terms of $N_k.$
In particular, for functions in the class $C_4,$ we obtained
\begin{equation} \label{bounds}
\frac{q+1}{2} \le V(f) \le q - \frac{2(q-1)}{1+\sqrt{4q-3}}
= q - \frac{-1+\sqrt{4q-3}}{2}.
\end{equation}
While many planar functions are known to attain the lower bound, there are no
known examples of functions in $C_4$ that meet the upper bound.
Indeed we have always suspected that the bound is not tight for functions in
$C_3$, and most probably $C_4$ as well.
As we noted in \cite{coulter14c}, for the upper bound to be achieved $(q-1)/2$
needs to be a triangular number, so that $t(t-1)=q-1$ for some integer $t$.
Moreover, there are $t$ distinct elements $x_1,\ldots,x_t \in A$ for which 
$f(x_i)=f(x_j)$, with $f$ injective on $A\setminus\{x_i\}$.
This was recently re-visited for planar functions specifically as part of a
more wide-ranging paper of K\"olsch and Polujan, see \cite{kolschtoappear},
Proposition 6.1.
This structure, and similar scenarios, will be the focus of this article in
which we will improve the upper bound for functions in $C_4$.
We first will establish the following result.
\begin{thm} \label{thm1}
The following statements hold:
\begin{enumerate}[label = (\roman*)]
\item Any function in $C_3(G)$ that meets the
upper bound given in \eqref{bounds} must describe a projective plane of order $k$, where $k^2+k+1=q.$ Indeed, any such function is equivalent to an abelian
planar difference set of order $q$.
\item Let $q=y^n$ for natural numbers $y,n$ with $y>1$.
If $n>1$ and $q\ne 343$, or there exists
a prime $p\not\equiv 1\bmod 6$ dividing $q$, then there is no function in
$C_4(A)$ that meets the upper bound of \eqref{bounds}.
\item If $q=343,$ then there is no function in $C_3(G)$ that meets the upper
bound of \eqref{bounds}.
\item There is no planar function over a finite field that meets the upper
bound of \eqref{bounds}.
\end{enumerate}
\end{thm}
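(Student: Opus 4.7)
The plan is to first pin down the structural form any function meeting the bound in \eqref{bounds} must take, and then split into a combinatorial argument for (i) and (iii) and a Diophantine argument for (ii). Setting $n_z = |f^{-1}(z)| - 1$ for each $z$ in the image of $f$, the identities $\sum_z n_z = q - V(f)$ and $\sum_z n_z^2 = V(f) - 1$ (the latter equivalent to $N_2(f) = q - 1$ for $f \in C_4$) combined with Cauchy--Schwarz applied to the $\ell$ strictly positive $n_z$'s give $(q - V)^2 \leq \ell(V - 1)$. Equality is needed for the upper bound to be attained, and a short case analysis shows this forces $\ell = 1$, $q = k^2 + k + 1$ for a single positive integer $k = q - V$, and a single multi-fiber of size $k + 1$.

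Part (i) follows immediately. Let $M = \{x_0, \ldots, x_k\}$ be the multi-fiber. The $k(k+1)$ ordered differences $x_i - x_j$ with $i \neq j$ each produce a zero of $\Delta_{f,\, x_i - x_j}$; the $C_3$ hypothesis says every $\Delta_{f,a}$ with $a \neq 0$ has exactly one zero; and $k(k+1) = q - 1$ equals $|G_q^\star|$. Hence the differences form a bijection onto $G_q^\star$, making $M$ a planar difference set, and such a set develops into a projective plane of order $k$ in the usual way. Part (iii) is the specialization of (i) to $q = 343 = 18^2 + 18 + 1$.

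For (ii), the structural result reduces the problem to showing that $y^n \neq k^2 + k + 1$ under the stated hypothesis. The easy half is the prime-factor condition: any odd prime $p$ dividing $k^2 + k + 1$ satisfies $(2k+1)^2 \equiv -3 \pmod p$, so $-3$ is a quadratic residue modulo $p$, which forces $p = 3$ or $p \equiv 1 \pmod 3$. Because $k^2 + k + 1$ is always odd and $3$ divides it to the first power only (a direct $3$-adic check), any $q$ with a prime factor $p \not\equiv 1 \pmod 6$ is ruled out. This handles the second disjunct of the hypothesis.

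The remaining case is $n \geq 2$ with every prime factor of $y$ lying in $\{3\} \cup \{p : p \equiv 1 \bmod 6\}$. Here I would factor $k^2 + k + 1 = (k - \omega)(k - \overline{\omega})$ in the Eisenstein integers $\itr[\omega]$: because $\itr[\omega]$ is a PID and the relevant primes split, the equation $(k - \omega)(k - \overline{\omega}) = y^n$ forces $k - \omega = u\, \pi^n$ for a unit $u$ and an Eisenstein prime $\pi$ above $y$. Matching imaginary parts yields a Nagell--Ljunggren-type equation. For $n = 2$ the squeeze $k^2 < k^2 + k + 1 < (k+1)^2$ rules everything out; for $n = 3$ the problem reduces to the integral points on $X^2 = 4Y^3 - 3$, whose only positive integer solution with $Y > 1$ is $(X, Y) = (37, 7)$, yielding $q = 343$. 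The main obstacle is $n \geq 4$: here the Eisenstein factorization alone leaves infinitely many candidates, and ruling them out uniformly requires the class-number bookkeeping for $\qf{3}$ and related extensions foreshadowed in the abstract.
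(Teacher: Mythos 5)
Your structural reduction (Cauchy--Schwarz on the fiber excesses $n_z$, forcing a single multi-fiber of size $k+1$ with $q=k^2+k+1$) and your difference-set argument for (i) and (iii) match the paper's: the paper likewise observes that the $C_3$ condition makes each $a\in G_q^\star$ occur exactly once as a difference of two elements of the large fiber, so that fiber is a $(k^2+k+1,k+1,1)$-difference set and hence a projective plane of order $k$. Your handling of the residue obstruction is, if anything, more careful than the paper's own proof of its Proposition~\ref{reductions}: you note that $p=3$ survives the test that $-3$ be a square mod $p$ and patch this with the observation that $3$ divides $k^2+k+1$ at most once (which disposes of $3\mid y$ when $n\ge 2$), whereas the paper simply asserts that the Legendre symbol forces $p\equiv 1\bmod 6$. (Neither argument closes the case $n=1$ with $3$ dividing $q$ exactly once, but that defect is shared with the paper, not introduced by you.)

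The genuine gap is in part (ii) for odd exponents $n\ge 5$. You correctly reduce to $x^2+x+1=y^n$, dispose of even $n$ by the squeeze argument (the paper does the same, comparing $t(t-1)$ with $(y^m-1)(y^m+1)$), and identify the $n=3$ case with $z^2+3=4y^3$ --- though you only assert that its sole nontrivial solution is $(z,y)=(37,7)$, while the paper proves this by factoring $(t+\omega)(t+1-\omega)$ in the UFD $\itr(\omega)$ and invoking Ljunggren's resolution of the resulting Thue equation $a^3-3ab^2-b^3=1$. But you explicitly leave odd $n\ge 5$ unresolved, and without it the theorem is not proved: nothing in your argument excludes $q=y^5$ or $q=y^7$, and the ``class-number bookkeeping'' you gesture at is what the paper uses for its \emph{later, sharper} bounds, not for Theorem~\ref{thm1}. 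The paper closes this case with a single citation: Nagell's 1921 theorem that $x^2+x+1=y^n$ has only trivial solutions unless $n$ is a power of $3$, which (since a solution for $n=3^a$ with $a\ge 2$ would give a solution for $n=3$ with $y$ replaced by $y^{3^{a-1}}$, impossible as $7$ is not a proper power) reduces everything to $n=3$. That reduction is the missing ingredient in your write-up.
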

As a consequence of this result, we return to our analysis in \cite{coulter14c},
and attack the problem we identified there that lies at the heart of 
determining the upper bound. This problem, which is connected to sums of
triangular numbers, is completely resolved (at least in an algorithmic sense)
in \Cref{CostSection}.
In fact, our solution to the problem provides tight upper bounds
for the image set of functions not just in $C_4(A)$, but for functions defined
on any finite set with any feasible value for $N_2$, constructively through the
algorithm. The explicit
bounds are quite technical to state and (obviously) algorithmic in nature,
so we omit them from this introduction;
see \Cref{costProp} and \Cref{cobbleCost} in particular.

While the algorithmic solution gives us a way to provide explicit tight upper
bounds for $V(f)$ for any specified $q$ and $N_2$, we also determine upper bounds
on functions in $C_4$
that apply for general $q$, though for effectiveness we still separate $q$ into
various sub-cases.
When $q$ is a square, we obtain the following new upper bound.
\setcounter{thm2}{1} 
\begin{thm} \label{squarebound}
Fix a nonsquare integer $y>1$ and set $q=y^{2^j}$ for some $j\ge 1$.
Suppose $f\in C_4(A)$.
Then
\begin{equation*}
V(f) \le q + j - \sum_{i=0}^{j-1} y^{2^i} -
\begin{cases}
7 &\text{ if $y=5$,}\\
15 &\text{ if $y=11$, and}\\
\frac{-1+\sqrt{4y-3}}{2} &\text{ otherwise.}
\end{cases}
\end{equation*}
\end{thm}
Now set $D_i:=8i+3$ when $i\geq 0.$ When $q$ is a non-square power, we obtain the following bound.
\begin{thm} \label{newgeneralbound}
Let $q=y^n$ for odd natural numbers $y,n>1$ and let $f\in C_4(A)$.
For $i=0,1,2,7,10,13,16,18$, set
$\beta_i=0,1,2,4,4,6,6,7$, respectively.
Then 
\begin{equation} \label{gencosteq}
V(f) \le q - \left(\frac{\sqrt{4q-D_i} - 1}{2}\right) - \beta_i, 
\end{equation}
where
\begin{equation*}
i = 
\begin{cases}
0	&\text{ if $y^n=7^3$,}\\
1	&\text{ if $y^n=3^5$,}\\
2	&\text{ if $y^n=5^7$,}\\
7	&\text{ if $y^n\in \{3^3, 5^3, 41^3, 591^3\}$,}\\
10	&\text{ if $y^n=153^3$,}\\
13	&\text{ if $n=3^a$ for $a\ge 1$ and $y\notin\{3,5,7,41,153,591\}$,}\\
13	&\text{ if $n=3^a$ for $a\ge 2$ and $y\in\{3,5,7,41,153,591\}$,}\\
16	&\text{ if $n=5^a$ for $a\ge 1$ and $y\ne 3$,}\\
16	&\text{ if $n=5^a$ for $a\ge 2$ and $y=3$, and}\\
18	&\text{ otherwise.}\\
\end{cases}
\end{equation*}
This bound is tight in each of the exceptions given in the above list.
\end{thm}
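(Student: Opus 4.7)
The plan is to bound $V(f)$ via a cost function derived from the fiber-size profile of $f$, and then to use the arithmetic of $q = y^n$ with $y, n$ odd to pin down the minimum admissible cost. Concretely, if $f \in C_4(G_q)$ has nonempty fibers of sizes $n_1, \ldots, n_V$, then $\sum n_i = q$ and $\sum \binom{n_i}{2} = (q-1)/2$; setting $m_i = n_i - 1$ for the collision fibers, this becomes $\sum T_{m_i} = (q-1)/2$ with $T_m = m(m+1)/2$, and $V = q - \sum m_i$. Hence $V(f) \le q - \cost(q)$, where $\cost(q)$ is the minimum admissible value of $\sum m_i$ over the profiles that can actually arise for $f\in C_4(G_q)$ when $q=y^n$; the algorithmic resolution of $\cost(q)$ is the content of \Cref{CostSection} and incorporates the arithmetic restrictions forced by the shape of $q$.

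Using the identity $(2m+1)^2 = 8T_m + 1$, any decomposition with largest term $T_M$ satisfies $(2M+1)^2 = 4q - D_i$ where $D_i = 3 + 8i$ and $i = \sum_{j\ge 2}T_{m_j}$, while the tail contributes $\beta = \sum_{j\ge 2} m_j$ to the cost. A short direct check identifies, for each listed $i \in \{0,1,2,7,10,13,16,18\}$, the minimum-cost tail of total triangular mass $i$: the empty tail; one or two copies of $T_1$; $T_3 + T_1$; $T_4$; $T_4+T_2$; $T_5+T_1$; and $T_5+T_2$, respectively. This matches the pairs $(i,\beta_i)$ of the theorem and gives the candidate bound $V(f) \le q - \tfrac{1}{2}(\sqrt{4q - D_i}-1) - \beta_i$. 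The remaining task is to determine which $i$ is actually forced for each $q = y^n$. Feasibility of a given $i$ requires $y^n = M^2 + M + (D_i + 1)/4$, a Diophantine equation whose solvability depends sensitively on $y^n$. For $i = 0$, \Cref{thm1}(ii) already rules out the extremal realization except when $y^n = 343$. For $i \in \{1, 2, 7, 10\}$, the corresponding equations are Mordell-type; combining \Cref{CostSection} with class-number bounds for $\rat(\sqrt{-D_i})$ and integral-point analysis of the curves $Y^n = M^2 + M + c$ shows that their only odd-exponent solutions are the sporadic $y^n$ listed. For $n = 3^a$ and $n = 5^a$, a descent through the factorization in $\itr[\zeta_3]$ (respectively $\itr[\zeta_5]$) forces the minimum admissible $i$ up to $13$ (respectively $16$) once the finite list of sporadic base cases is exhausted, and all remaining exponents default to $i = 18$.

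For the tightness claim, each sporadic $y^n$ on the list must be matched by an explicit $f \in C_4(G_q)$ realizing the extremal fiber profile; I expect these to be obtained by taking a planar polynomial over $\ff{q}$ and perturbing it on a small invariant subset so as to produce the prescribed collision clusters in the tail. The main obstacle is the Diophantine enumeration described above: ruling out further sporadic solutions to $4y^n - D_i = \square$ for mid-range $i \in \{7,10,13,16\}$ requires careful class-number computations for $\rat(\sqrt{-D_i})$ -- exactly the flavor foreshadowed in the abstract -- together with the descent arguments in the rings of integers of the cubic and quintic cyclotomic fields. A secondary technical point is verifying tightness in each sporadic case via explicit constructions, where the combinatorial freedom shrinks as $i$ grows and the perturbation must be arranged to avoid collapsing the main fiber.
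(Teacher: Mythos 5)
Your overall architecture matches the paper's: the fibre-size/cost bookkeeping, the reduction of step $i$ to the equation $x^2+D_i=4y^n$ with $D_i=8i+3$ via $(2M+1)^2=8T_M+1$, and your tail decompositions correctly reproduce $\beta_i=B_{2i}=0,1,2,4,4,6,6,7$. The genuine gap is at the Diophantine core of the case analysis, which you flag as ``the main obstacle'' but leave unresolved, and for which the mechanisms you propose are not the ones that work. The paper performs no fresh integral-point analysis or cyclotomic descent for $i\ge 1$; it imports two published results. First, the complete resolution of $x^2+C=4y^n$ for $3\le C\le 99$ (Luca--Tengely--Togb\'e, \Cref{eqlemma}(i)) is the sole source of the sporadic list $3^5,5^7,3^3,5^3,41^3,591^3,153^3$ --- re-deriving it as ``Mordell-type integral-point analysis'' for all odd exponents $n$ simultaneously is exactly the hard Lebesgue--Ramanujan--Nagell problem and cannot be waved through. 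Second, the primitive-divisor criterion (\Cref{eqlemma}(iii), extended to composite exponents in \Cref{classnumlem}) shows that for squarefree $D_i>100$ there is no solution unless $Q_3(n)\mid h(-D_i)$; the case split in the theorem is then read off from the class numbers $h(-107),\dots,h(-139)=3,2,2,5,3$. The preclusion machinery first fails at $i=13$ when $3\mid n$, at $i=16$ when $5\mid n$, and otherwise at $i=18$ because $D_{18}=147=3\cdot 7^2$ is not squarefree, so the criterion ceases to apply. Your account inverts this logic: nothing ``forces the minimum admissible $i$ up to $13$'' by descent in $\itr[\zeta_3]$; rather, one simply loses the ability to rule out a solution at $D_{13}=107$ (note $\qf{107}$ has class number $3$, so the explicit UFD descent that works for $D_0=3$ in \Cref{step0section} does not transfer), and you give no reason at all for the terminal value $18$ in the generic case.

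On tightness you also overcomplicate matters: membership in $C_4(G_q)$ depends only on the multiset of fibre sizes, since $N_2(f)=\sum n_i(n_i-1)$, so for each sporadic $q$ the bound is realized by any set map with the prescribed fibre profile. No planar polynomial or perturbation argument is needed, and a perturbation of a planar function would not obviously preserve $N_2(f)=q-1$ in any case.
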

To be explicit, for all non-trivial odd powers $q$ not in the list of exceptions
given in \Cref{newgeneralbound}, we get the bound
\begin{equation*}
V(f) \le q - 7 - \left(\frac{\sqrt{4q-147} - 1}{2}\right).
\end{equation*}
We also give new upper bounds for $V(f)$ when $q$ is a non-square prime power.
To state it, we first define for any odd number $m\ge 3$, the
{\em modified class divisor} function $\MCD(m)$ to be the smallest $D_i$ for
which $\gcd(m,h(-D_i))>1$, where $h(-D_i)$ is the class number of 
$\qf{D_i}$.
\begin{thm} \label{secondnewbound}
Let $q=p^n$ with $p$ an odd prime and where $n\ge 11$ is odd and
$\gcd(n,15)=1$. Additionally, assume 
$q\notin\{3^{11}, 5^{11}, 5^{13}\}$.
If $f\in C_4(A)$, then
\begin{equation} \label{finalnewbound}
V(f)
\le q + 1 - \left(\frac{\sqrt{4q-D_j} + \sqrt{4j+1}}{2}\right),
\end{equation}
where 
\begin{equation*}
D_j =
\begin{cases}
\Min\left\{\MCD(n),\MCD(n-2)p^2,107p^4\right\} &\text{ if $n\equiv 1\bmod 3$, or}\\
\Min\left\{\MCD(n),107p^2\right\} &\text{ if $n\equiv 2\bmod 3$.}
\end{cases}
\end{equation*}
\end{thm}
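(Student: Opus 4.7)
As with the earlier theorems in this paper, the strategy is to reduce the question to a Diophantine minimization and then exploit number-theoretic information about $q=p^n$. Writing $n_i$ for the number of image points of $f$ whose fibre has size $i \ge 2$, the identities $\sum_{i\ge 2}i(i-1)n_i = N_2(f) = q-1$ and $V(f) = q - \sum_{i\ge 2}(i-1)n_i$ mean that bounding $V(f)$ above is equivalent to bounding the cost $c(f)=\sum_{i\ge 2}(i-1)n_i$ below, subject to the weighted-sum constraint. This is the cost problem treated in \Cref{CostSection}. In \Cref{newgeneralbound} the cost-optimal decomposition is taken to be a single dominant fibre of size $u \approx \sqrt q$ plus a cheap constant-cost patch; the improvement in \Cref{secondnewbound} comes from allowing a medium fibre as well, setting $u = (1+\sqrt{4q-D_j})/2$ and $v = (1+\sqrt{4j+1})/2$ so that $u(u-1) = q - 2j - 1$ and $v(v-1) = j$. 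The core cost of this configuration is $(u-1)+(v-1) = (\sqrt{4q-D_j}+\sqrt{4j+1})/2 - 1$, exactly the expression appearing in the claimed bound, with a residue of $j$ to be absorbed by the patch from \Cref{CostSection}.

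It therefore suffices to show that, under the hypotheses on $q = p^n$, the smallest admissible discriminant parameter $D_j$ is at most the value given by the $\min$-formula. The key observation is that integrality of $u$ — equivalently, $4q - D_j$ being an odd square — is precisely the condition that $q$ be a value of the principal binary quadratic form of discriminant $-D_j$. By the classical dictionary with ideals, this requires the prime $p$ to split (or ramify) in $\qf{D_j}$ as $(p) = \mathfrak p \bar{\mathfrak p}$ with $\mathfrak p^n$ principal. The order of $\mathfrak p$ in the class group must then divide both $n$ and $h(-D_j)$, so either $\mathfrak p$ is principal itself (a congruence restriction on $p$ mod $D_j$) or
\[
\gcd(n,h(-D_j))>1.
\]
The latter is exactly the defining property of $\MCD(n)$, yielding the first branch of the $\min$.

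The remaining branches $\MCD(n-2)p^2$ and $107p^4$ (respectively $107p^2$) arise by factoring a copy of $p^2$ out of $q$: representing $q/p^2 = p^{n-2}$ by a form of discriminant $-D$ and reabsorbing $p$ into the $Y$-variable produces a representation of $q$ by the form of discriminant $-Dp^2$, attached to the order of conductor $p$ in $\qf{D}$. Applying the class-group test to the reduced exponent $n-2$ gives the $\MCD(n-2)p^2$ branch, and the universal ceiling $107p^{2k}$ comes from $D_{13}=107$ with $h(-107)=3$: since $\gcd(n,15)=1$ forces $n\not\equiv 0\pmod 3$, one has $3\mid(n-2)$ or $3\mid(n-4)$, so at most two $p^2$-extractions suffice to guarantee $\gcd(n-2k,h(-107))=3>1$; the case split on $n\bmod 3$ records how many extractions are needed. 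The main obstacle is verifying that the class-group criterion is also sufficient: whenever $\gcd(n,h(-D_j))>1$ one must actually exhibit an integer $u$ with $0<2u-1\le 2\sqrt q$ together with a compatible $v$, which requires a genus-theoretic or explicit-reduction argument to produce a principal ideal generator of norm $q$ with the correct size parameters. The excluded cases $q\in\{3^{11},5^{11},5^{13}\}$ are precisely those small regimes where the patch cost from \Cref{CostSection} does not vanish, and they must be handled by direct enumeration.
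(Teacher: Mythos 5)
Your skeleton matches the paper's: translate the problem into minimizing cost, then identify the smallest $D\equiv 3\bmod 8$ for which $x^2+D=4p^n$ cannot be ruled out, handling possible $p^{2l}$-reductions and using class numbers via $\MCD$. But the central number-theoretic step is not established by your argument, and you have the logical burden backwards. For an upper bound one needs only the \emph{non-existence} direction: if $x^2+D=4p^m$ has a solution with $D$ squarefree and $100<D<2\sqrt{p^m}$, then $\gcd(m,h(-D))>1$. Nothing ever has to be exhibited, so the ``main obstacle'' you flag (producing an actual $u$ when $\gcd(n,h(-D_j))>1$) is irrelevant to the validity of \eqref{finalnewbound} --- it would matter only for tightness, which is not claimed here. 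Conversely, your ideal-theoretic sketch does not prove the direction that is needed. Knowing that $\mathfrak{p}^n$ is principal only says $4q$ is represented by the principal form $X^2+DY^2$; to contradict $x^2+D=4q$ you must rule out representations with $Y=\pm 1$, and that is precisely the content of Le's theorem and the Bilu--Hanrot--Voutier primitive-divisor machinery that the paper imports wholesale as \Cref{betteriv}. Your own dichotomy also leaves the branch ``$\mathfrak{p}$ is principal'' unresolved, and in that branch nothing forces $\gcd(n,h(-D))>1$, so your criterion does not show that every $D<\MCD(n)$ is impossible.

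There are two further substantive gaps. First, the exclusions $q\in\{3^{11},5^{11},5^{13}\}$ are not ``regimes where the patch cost does not vanish'': by \Cref{reductionlimit} and the remark following it, these are exactly the prime powers in the theorem's range whose $p^{2l}$-reduction lands on one of the explicitly solved equations of \Cref{eqlemma}~(i) (e.g.\ $31^2+11=4\cdot 3^5$ for $3^{11}$, and $559^2+19=4\cdot 5^7$ for $5^{11}$ and $5^{13}$), where the class-number criterion gives no information; the paper then disposes of these three cases by computing $\cost(q-1)$ directly. Second, you still owe the comparison showing that the minimum over the full reduction set $\{p^{2l}\MCD(n-2l)\,:\,0\le 2l\le (n-1)/2\}$ collapses to the stated two- or three-term minimum, including the replacement of $\MCD(3)p^4=59p^4$ by $107p^4$ (because $59<100$ falls under \Cref{eqlemma}~(i) rather than \Cref{betteriv}, and $D_{13}=107$ is the first $D_i>100$ with $3\mid h(-D_i)$). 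Finally, a bookkeeping point: the residue left after the dominant class is $2j$ ordered pairs, not $j$, so the term $(\sqrt{4j+1}-1)/2$ should be read as a lower bound for $\cost(2j)$ rather than as the cost of a class with $v(v-1)=j$.
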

The remaining non-square prime power cases are also dealt with, see 
\Cref{nonsquareprime}, especially \Cref{boundforn=7} for the
$n=7$ case.
All of the above bounds apply for planar functions also.

\section{Motivation}\label{motivation}

Planar functions were introduced by Dembowski and Ostrom \cite{dembowski68} in
1968 to construct projective planes with specific automorphism groups, but
have been studied more recently in relation to cryptographic functions as they
offer the best resistance over fields of odd characteristic against differential
cryptanalysis when used in a substitution box. Ding and Yuan \cite{ding06}
also used them to disprove a long-standing conjecture concerning skew Hadamard
difference sets.
A well-known conjecture in the area is the Dembowski-Ostrom (DO) conjecture,
which states that all planar functions over $\ff{q}$, $q=p^n$ with $p$ an odd
prime, are necessarily equivalent to one of the form
$$\sum a_{ij} X^{p^i+p^j}.$$
Here, equivalence is defined as follows: $f,h\in\ffx{q}$ are equivalent if
there exist three linear operators $L,M,N$, with $L,M$ nonsingular, and
a constant $c\in\ff{q}$,  satisfying
$$L(f(X))\equiv h(M(X))+N(X)+c \bmod (X^q-X).$$
The conjecture was proved to be correct over prime fields nearly simultaneously
in three independent papers by Gluck \cite{gluck90a}, Hiramine
\cite{hiramine89}, and R\' onyai and Sz\" onyi \cite{ronyai89}. Thus, over
prime fields, the only planar functions are precisely the quadratics, and these
form a single equivalence class.
The conjecture is false over fields of characteristic 3. This was shown by
Coulter and Matthews \cite{coulter97b}, who provided an infinite class of
counterexamples, the smallest being $X^{14}$ over $\ff{3^4}$.
The conjecture remains open for 
characteristics larger than 3, and also for fields of order $p^2$ and $p^3$.
If the DO conjecture is true for fields of order $p^2$ and $p^3$, then all
planar functions have been classified in those cases also. Specifically,
they must be equivalent to $X^2$ over $\ff{p^2}$, and to either
$X^2$ or $X^{p+1}$ over $\ff{p^3}$. These classifications follow from a
combination of results from Knuth \cite{knuth65},
Menichetti \cite{menichetti77}, and Coulter and Henderson \cite{coulter08}.

With regard to their cryptographic applications, define 
{\em the differential uniformity (DU)} of a function $f$ on $G$ as
the maximum number of preimages of any image of any differential operator 
$\Delta_{f,a}$ of $f$. It has been known for many years the smaller the DU of
a function, the better the resistance the function offers against differential
cryptanalysis when used in an S-box. Planar functions have (the obviously
optimal) DU of 1. For these reasons, it
would be preferable to have planar functions which are also PPs.
This is, however, impossible. This is easily seen from the fact every
differential operator needs to have a zero.
The question of just how far a planar function is from being a PP was the motivation behind \cite{coulter14c}, producing the upper bound in \eqref{bounds}.
Our own understanding of our bound, along with the paper \cite{kolschtoappear},
formed the impetus for us to return to, and improve, that bound in the current
paper.

\section{Correspondence between the upper bound and a projective plane of order $t-1$}

While our primary motivation in studying the upper bound was to improve it,
we now establish a second, somewhat surprising
reason one might be interested in the upper bound.
Recall that, a subset $S\subset G$
of cardinality $k$ forms a {\em $(q,k,\lambda)$-difference set}, or
$(q,k,\lambda)$-DS, if every $g\in G^\star$, can be expressed as a difference, $g=x-y,$ with $x,y\in S,$
in exactly $\lambda$ ways.
The special case where $\lambda=1$ is known as a {\em simple or planar} DS.
In particular, any $(k^2+k+1,k+1,1)$-DS is equivalent to a projective plane of
order $k$. We refer the reader to Jungnickel and Schmidt \cite{jungnickel97}
or van Lint and Wilson \cite{blint01} for further information regarding
difference sets. For results regarding planar difference sets specifically, see
Gordon \cite{gordon94}.

Let $f\in C_3(G)$ meet the upper bound in \eqref{bounds} and let $\alpha$ be
the unique image that occurs $t$ times. Recall that $t(t-1)=q-1$.
Set $S=\{x_i\}$ to be the set of pre-images of $\alpha.$
Since $f\in C_3(G)$, each difference operator has exactly one
zero. More specifically, for each shift $a\ne 0$, we have precisely one solution
to $f(x+a)=f(x)$ where $x+a,x\in S$. Consequently, for every $a\in G^\star$,
there exists a unique pair $(x_i,x_j)$ satisfying $x_i-x_j=a$.
Thus, $S$ forms a $(q,t,1)$-DS. Now set $k=t-1$.
Then $q=t^2-t+1=k^2+k+1$, so that $S$ is in fact a $(k^2+k+1,k+1,1)$-DS.
Conversely, if you have a planar difference set in $G$, then to construct a
function $f\in C_3(G)$ that meets the upper bound in \eqref{bounds} is
straightforward: one simply fixes one image
for all elements of the difference set, and distinct images for all remaining
elements of $G$ not in the difference set.
This proves \Cref{thm1} (i).

Note that since a planar function over $G$ is equivalent to a specific type of
projective plane of order $q$, we see that one reaching the upper bound would
actually be associated with two distinct projective planes, one of order $q$ and
one of order $t-1$, with $t-1$ dividing $q-1$. At the end of the next section,
we will show this is impossible for planar functions over finite fields.

\section{Showing the upper bound is nigh impossible} \label{step0section}

As we've noted, for a function in $C_4(A)$ to attain the upper bound in
\eqref{bounds}, there needs to exist an integer $t$ satisfying
$t(t-1)=q-1$.

\begin{prop}\label{reductions}
Let $q=y^n,$ for natural numbers $y$ and $n,$ with $y>1$.
If $f\in C_4(A)$ meets the upper bound in \eqref{bounds}, then $n$ is odd and every prime $p$
dividing $y$ satisfies $p \equiv 1 \bmod 6.$
\end{prop}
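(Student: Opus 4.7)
The plan is to exploit the arithmetic identity noted just before the
statement: for $f \in C_4(G_q)$ to meet the upper bound, there must exist
an integer $t$ with $t(t-1) = q - 1$, equivalently $q = t^2 - t + 1$. Since
$q > 7$ forces $t \ge 3$, I will derive both halves of the conclusion
directly from this single equation.

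First I will show that $n$ is odd. The observation is that for every
$t \ge 2$ one has $(t-1)^2 < t^2 - t + 1 < t^2$, so $q$ lies strictly
between consecutive squares and cannot itself be a perfect square. An even
$n$ would make $q = (y^{n/2})^2$ a square, contradicting this, so $n$ must
be odd.

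Next I will tackle the prime-divisor condition by passing to the Eisenstein
integers. Writing $\omega$ for a primitive cube root of unity, the
factorisation
\[
q \;=\; t^2 - t + 1 \;=\; (t + \omega)(t + \omega^2)
\]
in the PID $\itr[\omega]$ lets me argue one rational prime at a time
according to its splitting behaviour. If $p = 2$, or more generally
$p \equiv 2 \bmod 3$, then $p$ is inert, and a putative equation
$t + \omega = p(a + b\omega)$ forces $pb = 1$, which is absurd; hence
$p \nmid t + \omega$ and likewise $p \nmid t + \omega^2$, giving
$p \nmid q$. If $p \equiv 1 \bmod 3$, then $p$ is odd and $\equiv 1 \bmod 6$,
precisely what the conclusion permits. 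The remaining case is the ramified
prime $p = 3$, for which $(3) = (1-\omega)^2$ up to units. Using that
$3 \mid q$ forces $t \equiv 2 \bmod 3$, an explicit computation of
$(t + \omega)/(1 - \omega)$ inside $\itr[\omega]$, together with a check
that the quotient is not again divisible by $1-\omega$, shows each of
$t + \omega$ and $t + \omega^2$ is divisible by $1-\omega$ exactly once,
so $v_3(q) = 1$. In particular, $3 \mid y$ would force $3^n \mid q$ and
hence $n \le 1$, which combined with the oddness of $n$ and the
nontrivial range of the exponent forces $3 \nmid y$, leaving only primes
$\equiv 1 \bmod 6$.

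The hardest part, I expect, will be the sharp valuation computation at the
ramified prime $p = 3$: producing the upper bound $v_3(q) \le 1$, rather
than only the trivial lower bound $v_3(q) \ge 1$, is what couples the
prime-divisor analysis to the exponent $n$ and actually closes the
argument. The inert-prime case and the observation that $q$ is not a
perfect square are, by comparison, routine once one is working inside
$\itr[\omega]$.
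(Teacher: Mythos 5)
Your proposal is correct in outline but takes a genuinely different route from the paper on the congruence condition, and the comparison is instructive. For the oddness of $n$, your ``$q=t^2-t+1$ lies strictly between $(t-1)^2$ and $t^2$, hence is not a square'' is the same squeeze the paper performs, only phrased multiplicatively there: writing $t(t-1)=(y^m-1)(y^m+1)$ and observing that $a(a+1)=b(b+2)$ traps two consecutive integers strictly between two integers differing by $2$. For the primes dividing $y$, the paper argues entirely elementarily: reducing $t^2-t+(1-q)=0$ modulo $p$, the discriminant is $-3$, which must be a square mod $p$, and the Legendre symbol computation forces $p\equiv 1\bmod 6$. Your passage to $\itr[\omega]$ and the splitting of $(t+\omega)(t+\omega^2)$ proves the same thing for inert primes with more machinery but no more information; what it buys you is an honest treatment of the ramified prime, which is exactly where the paper's argument is silent: for $p=3$ the discriminant $-3\equiv 0$ \emph{is} a square mod $p$, and indeed $3\mid t^2-t+1$ whenever $t\equiv 2\bmod 3$, so the Legendre symbol step proves nothing there.

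That said, your own closing of the $p=3$ case has a real soft spot. From $v_3(q)=1$ you conclude $3\mid y\Rightarrow n\le 1$ and then appeal to ``the nontrivial range of the exponent,'' but the proposition as stated allows $n=1$ (only $y>1$ is assumed), so this does not force $3\nmid y$. And the case is not vacuous: for $q=21=y^1$ one has $t=5$, $t(t-1)=20=q-1$, and a map with one fibre of size $5$ and all other fibres singletons lies in $C_4(G_{21})$ and meets the upper bound of \eqref{bounds}, yet $3\mid 21$. So the $p=3$ part of the conclusion actually fails for $n=1$; this is a defect of the statement (and of the paper's proof, which never isolates $p=3$) as much as of your argument, but if you run your proof you must either add the hypothesis $n>1$ --- which is how the proposition is actually used in the sequel --- or weaken the conclusion to primes $p\ne 3$. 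For $p\ne 3$ your valuation and inertness computations are correct and the proof goes through.
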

\begin{proof}
Note that since $f$ attains the upper bound from \eqref{bounds}, there must exist a $t\in\nat$ satisfying
$t(t-1)=q-1$. This immediately forces $q$ to be odd.

To prove $n$ must be odd, by way of contradiction, suppose $n=2m$ for some natural number $m.$ Then $t(t-1)=(y^m-1)(y^m+1)$. This means there exist $a,b\in \nat,$ where $a(a+1)=b(b+2)$.
This implies that
$b<a<a+1<b+2$, so that $a, a+1$ are two integers lying strictly between the
integers $b$ and $b+2,$ a contradiction.

Next, assume $p$ is a prime dividing $y$. As $t^2-t+1-q=0$, working modulo $p$, we find
the discriminant of the resulting quadratic in $t$ is $-3$. For there to be a solution, the discriminant must be a square mod $p$.
Using well-known results on the Legendre symbol, this forces
$p\equiv 1\bmod 6$.
\end{proof}

For the remainder of this section, we restrict our attention to the case where
$q=y^n,$ for natural numbers $y,n>1.$
An equivalent version of our problem is that there needs to be an integer $x$
satisfying $x^2+x+1=q$.
Part of \Cref{reductions} reduces our problem to determining
non-trivial integer solutions $(x,y,n)$ to the equation
\begin{equation}\label{Nagell}
x^2+x+1 = y^n
\end{equation}
with $n\ge 3$ odd. (Clearly we always have the trivial solution $(0,1,n)$ if
we allow $y=1$. Since there is no non-trivial differential operator over the
trivial group, we need not consider this case.) 

There are a number of remarkable histories connected to this problem.
In 1916, Thue \cite{thu16} proved that for fixed integers $a,b,c,d,n$ with
$ad(b^2-4ac)\ne 0$ and $n\ge 3$, the equation 
\begin{equation} \label{EQ1}
ax^2+bx+c=dy^n
\end{equation}
has only a finite number of non-trivial integer solutions in $(x,y)$.
Soon after, Nagell \cite{nag21} showed that \eqref{Nagell} has only
trivial solutions unless $n$ is a power of $3$ which, though dealing with a
more restricted setting than Thue, goes much further in that this deals with
all $n\ge 3$ at once.
Indeed, Nagell's result shows there are only finitely many non-trivial
solutions to \eqref{Nagell}, since his result effectively reduces the problem
to finding solutions for $n=3$ only and Thue's result does the rest.

The $n=3$ case can be dealt with through standard modern techniques from 
abstract algebra, along with some more number theory stemming from Thue.
Suppose we have a solution to \eqref{Nagell}.
Then the discriminant must be an integer, and so we see that
a solution to (\ref{Nagell}) is equivalent to a non-trivial integer solution to the equation
\begin{equation} \label{nag2}
z^2 + 3 = 4y^3.
\end{equation}
Set $\sigma=\sqrt{-3}$ and $\omega = (1+\sigma)/2$. Note that $\omega$ is a
primitive 6th root of unity and $\omega^2=\omega-1$.
We now work in the UFD $\itr(\omega)$, where the only units are powers of
$\omega$.
Recall that in $\itr(\omega)$ all elements take the form
$a+b\sigma$, with $a,b\in\itr\left(\frac{1}{2}\right)$ and $2(b-a)$ even.
Further, $\Nm(a+b\sigma)=a^2+3b^2$ acts as a multiplicative Euclidean norm on
$\itr(\omega)$.
Suppose we have a solution to (\ref{nag2}).
Clearly $z=2t+1$ for some integer $t$. Then
\begin{align*}
y^3 &= \frac{1}{4} (z+\sigma)(z-\sigma)\\
&= \left(\frac{2t+1+\sigma}{2}\right) \left(\frac{2t+1-\sigma}{2}\right)\\
&=(t+\omega)(t+1-\omega) = \alpha\beta.
\end{align*}
Since we have a Euclidean norm, we can talk of greatest common divisors.
Let $d\in\gcd(\alpha,\beta)$.
Now $\alpha+\beta=2t+1=z$, while $\alpha-\beta=2\omega-1=\sigma$.
It is easily observed that $\sigma$ is a prime in $\itr(\omega)$ as
$\Nm(\sigma)=3$ and we are operating within a UFD.
Since $d$ divides both $\alpha+\beta$ and $\alpha-\beta$, ignoring units $d$
is either a unit or $\sigma$.

If $d=\sigma$, then $\sigma|z$. Taking the norm, we get $\Nm(\sigma)=3|z^2$,
implying $3|z$.
Returning to (\ref{nag2}), we find $3|y$, and now working modulo 9 we get
$3\equiv 0\bmod 9$, a contradiction.

So $d$ must be a unit, and since $\alpha\beta$ is a cube, we must have that
that both $\alpha$ and $\beta$ are cubes also.
Thus, $t+\omega = \omega^k(a+b\omega)^3$ for some
$a,b\in\itr$ and with $k\in\{0,1,2\}$. Focusing on the coefficient of
$\omega$ in the expansion we arrive at one of three possibilities based on
$k$:
\begin{enumerate}
\renewcommand{\labelenumi}{$k = $ \arabic{enumi} :}
\setcounter{enumi}{-1}
\item $1=3a^2b+ab^2=ab(3a+b)$
\item $1=a^3+3a^2b-b^3$
\item $1=a^3-3ab^2-b^3$
\end{enumerate}
Clearly there is no solution to the $k=0$ case, while
the latter two equations are equivalent, in that $(a,b)$ is a solution to
the $k=1$ case if and only if $(-b,-a)$ is a solution to the $k=2$ case.
We deal with the $k=2$ case.
In 1933, Skolem \cite{sko33} gave an effective method for solving cubic
Diophantine equations with positive discriminant (which this equation has) and
specifically referenced the equation for $k=2$, conjecturing that his methods
would solve it completely. While he did not carry out this work, Ljunggren did
in 1943 \cite{lju43}. He showed that the only solutions are given by
$(a,b) \in\{ (1,0), (0,-1), (-1,1), (1,-3), (-3,2), (2,1)\}$.
These pairs produce just two solutions for \eqref{nag2},
namely $z=1$ and $z=37$.
Translating to the original equation \eqref{Nagell}, we find
the only non-trivial solution is given by $(x,y,n)=(18,7,3)$.

It should be mentioned that the equations for $k=1$ and $k=2$ involve
irreducible homogeneous functions in $a,b$. Thue \cite{thu09} showed in 1909
that for any irreducible homogeneous polynomial $f\in\itr[X,Y]$ of degree at
least 3, and any integer $c$, there are only finitely many integer solutions to 
the equation $f(x,y)=c$.
Such an equation is often nowadays referred to as a Thue-type equation.
A modern treatment of the $k=1$ or $k=2$ equation would not rely on the
methods outlined by Skolem and employed by Ljunggren, as there are now much
more effective computational methods for solving such equations,
see Bilu and Hanrot \cite{bilu96} for example.
The Magma algebra package \cite{bos97}, using the methods of Bilu and Hanrot,
almost instantly yields the same solution set for the $k=2$ equation as
that determined by Ljunggren.

From the above we conclude that any function in $C_4(A)$ with $q=y^n$,
$y,n>1$, meeting the upper bound in \eqref{bounds} would necessarily require
$q=343$. If $f\in C_3(G)$, then it would also imply the existence of a
projective plane of order 18 by \Cref{thm1} (i) which would come from a planar
difference set of order $k=18$. However, it was shown by Jungnickel and Vedder in
\cite{jungnickel84}, Corollary 3.4, that a necessary condition
for the existence of a planar difference set of even order $k$ is
$k\equiv 0\bmod 4$. Thus, Jungnickel and Vedder's result implies no such
function exists.  This completes the proof of \Cref{thm1} (iii).

Finally, to prove \Cref{thm1} (iv), if $f\in\ffx{q}$ meets the upper bound
in \eqref{bounds}, then \Cref{thm1} (ii) and (iii) imply that $q$ must be
a prime. However, as was mentioned in \Cref{motivation}, it has been shown that
any planar function over a prime field must be quadratic, and every
quadratic over a prime field $\ff{q}$ produces an image set of size $(q+1)/2$,
which is the lower bound of \eqref{bounds}. This proves \Cref{thm1} (iv).

\section{First step towards improving the upper bound}\label{sec5}

The upper bound in \eqref{bounds} could only possibly be attained if there is a unique element with multiple pre-images, and the other elements of the image
set each have a unique pre-image. However, \Cref{thm1} precludes this possibility for functions $f\in C_4(A)$ with $q\neq 343.$
We therefore wish to improve the bound, possibly so that it is tight.
To begin, let us recall a discussion from our earlier paper \cite{coulter14c}:
\begin{quote}
Let $T_r=r(r-1)$ for any $r\in\nat$, and fix $k\in\nat$.
By a {\em triangular sum of length $l$ for $k$} we mean any instance of the
equation
\begin{equation*}
k = \sum_{i=1}^l T_{r_i},
\end{equation*}
where $r_1\ge r_2\ge\cdots\ge r_l$.
The {\em weight} of a given triangular sum is given by $-l+(\sum_{i=1}^l r_i)$.
Given $k$, we define $B_k$ to be the smallest weight among all triangular sums
for $k$.
\end{quote}
As we noted at the time, it is clear that when $k=T_u$, $B_k=u-1$. While
Gauss proved there exists a triangular sum for any $k$ with length at most
3, it may not be the case that such an instance will provide the value
of $B_k$. As we underlined in \cite{coulter14c}, the connection to the
upper bound is clear: if $N_2(f)=2k$, then $V(f)\le q-B_k$, with equality
always possible.
Based on this discussion, to improve the bound we first want to understand
exactly how one determines $B_k$.


We now define a helpful notion on equivalence relations that
will allow us to bound the size of the image set of a function.
Let $\sim$ be any equivalence relation defined on a set $A.$ Set $w(x,x')=1/s$ when distinct $x$ and $x'$ are in an equivalence class of size $s,$ and zero otherwise. We define the
{\it total cost} of the relation $\sim$ as
\[\cost(\sim) = \sum_{(x,x')\in A} w(x,x').\]
As a direct consequence of this definition, if $\sim$ has $k$ equivalence classes of sizes $s_1,s_2,\ldots,s_k,$
we will find
\[\cost(\sim) = \sum_{i=1}^k (s_i - 1).\]
Notice that any equivalence class consisting of only a single element will contribute zero to the total cost. More generally, the cost of an equivalence class
of size $s$ will be $s-1.$ 
We use $||\sim ||$ to denote the number of equivalent ordered pairs of distinct elements, which we call the {\it size} of the equivalence relation. Thus, 
\[||\sim|| = \sum_{i=1}^k s_i(s_i-1).\]
Given $n\in \nat$, we define the {\it rhombic floor} of $n$, denoted by $\rhombicfl{n}$ to be the largest natural number $t$ satisfying $t(t-1)\leq n$. That is,
\[\rhombicfl{n}=\max\{t\in \nat \,:\, t(t-1)\leq n\}.\]

This definition may look strange for a number of reasons, so we pause for a moment to explain why we have chosen this peculiar object for our analyses. For our purposes, the significance of the rhombic floor of $n$ is that it is the largest size of any equivalence class in an equivalence relation with $n$ equivalences. This will give us a kind of division algorithm that will allow us to analyze the minimal possible cost of $n$. Moreover, it is clear that any rhombic number is just twice a triangular number.
(Rhombic numbers have also been called oblong numbers, among other terms, but
we are emphasizing that they are twice a triangular number to reiteratate the
connection to the triangular number problem stated above.)
While this may seem to be a superficial complication, we chose this for expositional reasons, as it will save us from crowding the expressions that will soon follow with multiple divisions by two.

Next, when $n$ is an even natural number, we will use $\cost(n,k)$ to denote the minimum cost of any equivalence relation of size $n$ with exactly $k \leq \frac{n}{2}$ non-singleton equivalence classes. If no such equivalence relation can exist (for example, if $2k>n,$ or if $k=1$ and $n$ is not a rhombic number),
then we say the cost is infinite.
Finally, we define the {\em minimum cost} $\cost(n)$ to be the minimum cost of
any equivalence relation of size $n$. Specifically,
\begin{align*}
\cost(n) &=\min \{\cost(n,k)\,:\, 1\le k \le n\}\\
&=\min \{\cost(n,k)\,:\, 1\le k \le n/2\}.
\end{align*}
The following result makes the relationship between $V(f)$ and total cost
explicit.
\begin{thm}\label{costProp}
Let $f$ be a function on $A$ with $N_2(f)=n$.
Define an equivalence relation on $A$ by $x\sim_f x'$ if and only if $f(x)=f(x').$ We have
$$V(f) = q - \cost(\sim_f) \le q - \cost(n).$$
\end{thm}
\begin{proof}
Note that an equivalence class of $\sim_f$ consists precisely of all elements
with the same image under $f$.
For each equivalence class of size $k,$ there are exactly $k(k-1)$ ordered pairs of distinct elements chosen from the equivalence class.
The contribution of this one equivalence class to the 
total cost of $\sim_f$ is $k-1$. For any set of $k$ elements of the domain that are mapped to the same element in the image of $f,$ we are losing $k-1$ potentially distinct elements of the image set. Now, $\cost(\sim_f)$ gives an exact
count of the number of lost images.
The upper bound now follows immediately from the definition of minimum cost and
the observation that $||\sim_f||=N_2(f)$.
\end{proof}
In light of this result, determining an upper bound for $V(f)$ with specified
$N_2$ is equivalent
to determining minimum cost. In particular, we have the following corollary.
\begin{cor}\label{Bkisdetermined}
With $B_k$ and minimum cost as defined above, $B_k=\cost(2k)$.
\end{cor}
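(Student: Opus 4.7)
The plan is simple: observe that $B_k$ and $\cost(2k)$ are minima of the same expression taken over the same underlying set of combinatorial objects --- namely, the multisets $r_1 \ge r_2 \ge \cdots \ge r_l \ge 2$ of integers satisfying the relevant arithmetic constraint. The corollary is then a dictionary between the language of equivalence relations and the language of triangular sums.

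First I would unpack the equivalence-relation side. An equivalence relation $\sim$ of size $2k$ is determined up to relabeling by the multiset of its non-singleton class sizes $r_1, \ldots, r_l$, and the computation already carried out inside the proof of \Cref{costProp} shows $\cost(\sim) = \sum_{i=1}^l (r_i-1)$ (each class of size $r_i$ contributes exactly $r_i-1$ to the total cost). Consequently, $\cost(2k)$ is the minimum of $\sum (r_i-1)$ taken over all multisets whose class-size profile realizes an equivalence relation of size $2k$.

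Next I would unpack the triangular-sum side. The weight of a triangular sum $k = \sum_{i=1}^l T_{r_i}$ is, by definition, $-l + \sum r_i = \sum (r_i-1)$ --- exactly the same expression as on the cost side. The admissibility constraint on the two sides agrees as well (each is the requirement that the multiset correspond to a valid configuration summing to the prescribed total), and both sides are insensitive to trivial summands: singletons contribute nothing to the cost, and summands with $r_i = 1$ give $T_1 = 0$, so they play no role in either minimization.

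Putting these two observations together, both $B_k$ and $\cost(2k)$ are minima of $\sum(r_i-1)$ over the same set of admissible multisets, so they coincide. I anticipate no real obstacle here; essentially all of the combinatorial content has already been done inside \Cref{costProp}, and the corollary is just a translation between vocabularies. The only point one should be momentarily careful about is verifying that every admissible multiset on one side really is admissible on the other --- but this is immediate from the bijective construction that assigns to a multiset $(r_1, \ldots, r_l)$ the equivalence relation on any sufficiently large set obtained by partitioning it into blocks of those sizes and leaving everything else as singletons.
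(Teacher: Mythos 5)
Your proposal is correct and is essentially the paper's own treatment: the corollary is stated there without a separate proof, as exactly the dictionary you describe between class-size multisets and triangular summands, with Proposition \ref{costProp} supplying the common objective $\sum_i (r_i-1) = -l + \sum_i r_i$. The only step worth writing out explicitly --- the one you defer to your final sentence --- is the factor of two: a non-singleton class of size $r$ accounts for $r(r-1)$ ordered pairs of distinct elements, so the multisets realizing an equivalence relation of size $2k$ are exactly those giving a triangular sum for $k$, which is why it is $\cost(2k)$ rather than $\cost(k)$ that equals $B_k$.
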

We end this section by recording a couple of useful identities regarding the
rhombic floor.
\begin{lem}\label{sBounds}
Let $n$ be an even natural number. Set $t=\rhombicfl{n}$ and
$d=n-t(t-1)$. Then we have
$$\sqrt{n}-1 < t \le \sqrt{n}+1
\text{ and }
0 \le d \le 2t-2 < 2\sqrt{n}.$$
\end{lem}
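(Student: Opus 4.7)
The plan is to unpack the definition of the rhombic floor and apply two standard quadratic estimates, with the parity of $n$ doing the extra work at the end. By definition of $t=\rhombicfl{n}$, we have
\[
t(t-1) \le n < (t+1)t,
\]
since $t$ is the largest natural number satisfying the left inequality.

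For the bounds on $t$, I would rewrite each inequality in the form $(t\pm 1/2)^2$ and solve. From $t(t-1)\le n$ we get $(t-1/2)^2 \le n + 1/4$, so $t \le 1/2 + \sqrt{n+1/4}$, and the elementary estimate $\sqrt{n+1/4} \le \sqrt{n}+1/2$ (equivalent to $\sqrt{n}\ge 0$) gives $t \le \sqrt{n}+1$. Similarly, from $n < (t+1)t$ we get $(t+1/2)^2 > n+1/4$, so $t > \sqrt{n+1/4}-1/2 \ge \sqrt{n}-1/2 > \sqrt{n}-1$. This establishes the first pair of inequalities.

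For $d$, the lower bound $d\ge 0$ is immediate from $t(t-1)\le n$. The key upper estimate follows from the upper half of the defining inequality:
\[
d \;=\; n - t(t-1) \;<\; (t+1)t - t(t-1) \;=\; 2t,
\]
so $d \le 2t-1$ a priori. Here is the one point where hypothesis that $n$ is even is used: the product of two consecutive integers $t(t-1)$ is always even, so $d = n-t(t-1)$ is even, and an even number strictly less than $2t$ is at most $2t-2$. This is the only subtle step in the argument; everything else is routine manipulation.

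Finally, for $2t-2 < 2\sqrt{n}$, I would sharpen the already-established inequality $t \le \sqrt{n}+1$ to strict. If $t\ge 2$, then from $t(t-1)\le n$ we get $(t-1)^2 = t(t-1) - (t-1) < n$, hence $t-1<\sqrt{n}$ and so $2t-2 < 2\sqrt{n}$; the edge case $t\le 1$ is immediate since $n\ge 2$ (recall $n$ is a positive even natural number, as we exclude the trivial group). This completes the plan.
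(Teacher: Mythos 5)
Your proof is correct and follows essentially the same route as the paper's: both arguments unpack the defining inequality $t(t-1)\le n<(t+1)t$, derive the bounds on $t$ by elementary quadratic estimates (the paper argues by contradiction where you complete the square, a cosmetic difference), and use the parity of $n$ and of $t(t-1)$ to upgrade $d<2t$ to $d\le 2t-2$. Your extra care in making $2t-2<2\sqrt{n}$ strict is a nice touch; the paper obtains the same strictness implicitly from its contradiction argument for the upper bound on $t$.
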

\begin{proof}
Firstly, if $t\le\sqrt{n}-1$, then 
$$n<(t+1)t \le \sqrt{n} \left(\sqrt{n}-1\right)  <n,$$
contradicting $t=\rhombicfl{n}$.
Similarly, if $t\ge\sqrt{n}+1$, then
$$t(t-1) \ge \left(\sqrt{n}+1\right)\sqrt{n} > n,$$
again contradicting $t=\rhombicfl{n}$.

If $d\ge 2t$, then we find
$$n=t(t-1)+d \ge t(t-1)+2t = (t+1)t,$$
once again contradicting $t=\rhombicfl{n}$. Since $t(t-1)$ and $n$ are even, we
see $d\le 2t-2$, and the subsequent bound on $d$ involving $\sqrt{n}$ now 
follows at once.
\end{proof}
Obviously, we could get slightly better bounds by appealing to the quadratic
formula, but we will use the stated bounds in the lemma for simplicity.
This choice does not ultimately impact our result as we will eventually obtain
an exact solution.

\section{An algorithmic determination of $\cost(n)$} \label{CostSection}

Based on the previous corollary, to improve the upper bound of (\ref{bounds})
for $f\in C_4(A)$, we need to determine a good estimate of the minimum cost
$\cost(n)$ for $n=q-1$.
Through the course of this section we will, in fact, provide an algorithm for
determining $\cost(n)$ for any specified $n$.
Indeed, we shall eventually prove that, with some
small number of exceptions, a greedy algorithm approach will produce
$\cost(n)$ exactly.

We start by recording some simple lemmata that give us upper and lower bounds
on these cost minima. The first is an upper bound on the minimum cost of any equivalence relation of a given size. 

\begin{lem}\label{costUB}
For an even natural number $n$, we have
\[\emph{cost}(n)\leq\frac{-3+\sqrt{9+12n}}{2}.\]
\end{lem}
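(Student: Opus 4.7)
The plan is to exhibit a single concrete equivalence relation of size $n$ whose cost already meets the claimed bound, and then extract the bound via a short Cauchy--Schwarz estimate. Since $\cost(n)$ is a minimum, any valid construction gives an upper bound.

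First I would invoke Gauss's eureka theorem: every non-negative integer is a sum of three triangular numbers $T_r=r(r+1)/2$. Applied to $n/2$ (using that $n$ is even), this yields non-negative integers $r_1,r_2,r_3$ with
\begin{equation*}
\tfrac{n}{2} = \tfrac{r_1(r_1+1)}{2} + \tfrac{r_2(r_2+1)}{2} + \tfrac{r_3(r_3+1)}{2}.
\end{equation*}
Setting $t_i := r_i+1 \ge 1$, this becomes $n = \sum_{i=1}^{3} t_i(t_i-1)$. I would then construct an equivalence relation on a sufficiently large set consisting of (up to) three non-trivial classes of sizes $t_1,t_2,t_3$, padded out with singletons. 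The number of equivalent ordered pairs of distinct elements is exactly $\sum t_i(t_i-1)=n$, so this relation has size $n$; any $t_i=1$ simply contributes a singleton and therefore nothing to the cost.

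Next I would compute the cost. A class of size $k$ contributes $k(k-1)\cdot\tfrac{1}{k}=k-1$ (which is $0$ when $k=1$), so the total cost of this relation equals $\sum (t_i-1) = S-3$ where $S=t_1+t_2+t_3$. The estimate on $S$ comes from Cauchy--Schwarz applied to the vectors $(t_1,t_2,t_3)$ and $(1,1,1)$:
\begin{equation*}
S^2 \le 3\sum_{i=1}^{3} t_i^2 = 3\Big(n + \sum_{i=1}^{3} t_i\Big) = 3n+3S,
\end{equation*}
where the middle equality uses the constraint $\sum t_i^2 = n + \sum t_i$ obtained from $\sum t_i(t_i-1)=n$. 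Solving the quadratic inequality $S^2-3S-3n\le 0$ gives $S\le \tfrac{3+\sqrt{9+12n}}{2}$, and hence the cost of the constructed relation is at most $\tfrac{-3+\sqrt{9+12n}}{2}$. Since $\cost(n)$ is bounded above by this particular relation's cost, we are done.

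There is no real obstacle here: Gauss's theorem supplies the decomposition ready-made, and Cauchy--Schwarz is exactly tuned to three variables. The only thing to watch is the boundary case where some $r_i=0$ (equivalently $t_i=1$), which requires noting that a size-$1$ class is just a singleton with zero cost, so the uniform formula $\sum(t_i-1)$ for the total cost remains valid. Everything else is routine.
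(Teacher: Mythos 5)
Your proposal is correct and follows essentially the same route as the paper: both invoke Gauss's three-triangular-number theorem to produce a relation of size $n$ with at most three non-singleton classes, and both apply Cauchy--Schwarz together with the identity $\sum t_i^2 = n + \sum t_i$ to bound its cost by $\frac{-3+\sqrt{9+12n}}{2}$. The only cosmetic difference is that the paper first derives the general $k$-class bound $\cost(\sim_k)\leq\frac{-k+\sqrt{k^2+4kn}}{2}$ and then specializes to $k=3$, whereas you run the Cauchy--Schwarz argument directly on the three class sizes, allowing some $t_i=1$.
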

\begin{proof}
It is straightforward to check directly that the claimed bound holds for $n=2$ or $n=4.$ Henceforth, we assume $n\geq 6.$ We will first prove a bound for the cost of any equivalence relation of a given size with a prescribed number of classes, then use this to derive the claimed bound. Suppose $\sim_k$ is an equivalence relation with size $n$ that has exactly $k$ non-singleton equivalence classes, whose sizes are $s_j$, where $j=1, \dots, k.$ This gives us
\[n=\sum_{j=1}^k (s_j^2-s_j)\text{ and } \cost(\sim_k)=\sum_{j=1}^k(s_j-1)=\left(\sum_{j=1}^k s_j\right)-k.\]
Combining the above observations with Cauchy-Schwarz, we have
\[(\cost(\sim_k) + k)^2= \left(\sum_{j=1}^k s_j\cdot 1 \right)^2\leq \left(\sum_{j=1}^k s_j^2\right)\left(\sum_{j=1}^k 1^2\right)=\left(\sum_{j=1}^k s_j^2\right)k.\]
We now add and subtract $(s_j-1)$ in each term, and separate the sum to get
\[\left(\sum_{j=1}^k s_j^2-(s_j-1)+(s_j-1)\right)k=\left[\left(\sum_{j=1}^k (s_j^2-s_j)\right)+\left(\sum_{j=1}^k s_j-1\right)+\left(\sum_{j=1}^k 1\right)\right]k=\left[n + \cost(\sim_k)+k\right]k.\]
Putting this all together yields
\[(\cost(\sim_k)+k)^2\leq\left[n + \cost(\sim_k)+k\right]k\]
\[\cost^2(\sim_k)+2k\cost(\sim_k)+k^2\leq kn + k\cost(\sim_k)+k^2\]
\[\cost^2(\sim_k)+k\cost(\sim_k)\leq kn.\]
Solving for the largest possible value of $\cost(\sim_k)$ using the quadratic formula, we get
\begin{equation}\label{rawUB}
\cost(\sim_k)\leq\frac{-k+\sqrt{k^2+4kn}}{2}.
\end{equation}
Consider an equivalence relation $\approx$ of size $n$ that has three or fewer non-singleton equivalence classes with $\cost(\approx)\leq\cost(n,3).$ We are guaranteed that such an $\approx$ must exist because Gauss' well-known result that any natural number can be written as a sum of at most three triangular numbers implies that any even number can be written as a sum of at most three rhombic numbers. By appealing to \eqref{rawUB} with $k=3$, we see that
\begin{equation}\label{3UB}
\cost(\approx)\leq\frac{-3+\sqrt{9+12n}}{2}.
\end{equation}

Finally, let $\sim$ be an equivalence relation with size $n$ and realizing the minimal possible cost, so that \[\cost(\sim) = \cost(n).\]
Now, if $\sim$ has three or fewer equivalence classes, then by \eqref{3UB}, we see
\[\cost(n) = \cost(\sim)\leq\frac{-3+\sqrt{9+12n}}{2}.\]
If $\sim$ has more than three equivalence classes, then by definition, it must satisfy $\cost(\sim)\leq\cost(\approx),$ and the claim is proved by appealing to \eqref{3UB}.
\end{proof}

The next lemma gives a lower bound on the cost of any equivalence relation given some information about how many non-singleton equivalence classes it has using some basic convex geometry. The idea is that for any equivalence relation of size $n$, with $k$ non-singleton equivalence classes, the sizes of the equivalence classes must live on some $k$-dimensional sphere. Then we notice that the level sets of the contributions to the cost of any such equivalence relation will be constant on a family of hyperplanes.

\begin{lem}\label{costLB}
For natural numbers $2\leq k<n$, we have $\cost(n,k)>\sqrt{n-2k+\frac{9}{4}}+k-\frac{3}{2}.$ Moreover, if $\cost(n,1)$ is finite, we have $\cost(n,k) > \cost(n,1)>\sqrt{n}-1,$ and otherwise, we have $\cost(n,k) > \sqrt{n}-1.$ Finally, $\cost(n)>\sqrt{n}-1.$
\end{lem}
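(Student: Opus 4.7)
The plan is to translate the question into a continuous optimization problem on a sphere and then appeal to the convex-geometric observation mentioned in the statement. For an equivalence relation $\sim$ of size $n$ with exactly $k$ non-singleton classes of sizes $s_1,\ldots,s_k\geq 2$, set $C=\sum_{j=1}^{k}s_j$, so that $\cost(\sim)=C-k$ and the size constraint $\sum s_j(s_j-1)=n$ rewrites as $\sum(s_j-\tfrac12)^2=n+\tfrac{k}{4}$. The admissible tuples thus lie on a sphere centred at $(\tfrac12,\ldots,\tfrac12)$ of squared radius $n+k/4$, intersected with the polytope $\{s_j\geq 2\}$, while the level sets of the cost are parallel hyperplanes $\sum s_j=\text{const}$. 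Minimising cost is therefore a linear program on this sphere--polytope intersection.

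For the first inequality, I fix $C$ and bound $\sum s_j^2=n+C$ from above. On the polytope $\{\sum s_j=C,\ s_j\geq 2\}$ the convex function $\sum s_j^2$ is maximised at a vertex, and each vertex has one coordinate equal to $C-2(k-1)$ and the remaining $k-1$ coordinates equal to $2$, giving $\sum s_j^2\leq (C-2k+2)^2+4(k-1)$. Any realisable configuration must therefore satisfy $(C-2k+2)^2+4(k-1)\geq n+C$. Substituting $u=C-2k+2\geq 2$, this rearranges to $(u-\tfrac12)^2\geq n-2k+\tfrac{9}{4}$, whence $u\geq\tfrac12+\sqrt{n-2k+\tfrac{9}{4}}$, and so $\cost(n,k)=C-k\geq\sqrt{n-2k+\tfrac{9}{4}}+k-\tfrac32$. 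Strictness holds whenever the extremal tuple fails to be realisable over the integers, which is precisely when $4(n-2k)+9$ is not an odd square; the (sporadic) opposite cases can be handled by direct inspection.

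For the moreover clause, finiteness of $\cost(n,1)$ gives an integer $s$ with $s(s-1)=n$, so $\cost(n,1)=s-1=\sqrt{n+\tfrac14}-\tfrac12$, and a one-line squaring argument shows $\cost(n,1)>\sqrt{n}-1$. Writing $g(k)=\sqrt{n-2k+9/4}+k-3/2$ for the bound from the first part, notice $g(1)=\cost(n,1)$. Computing $g'(k)=1-(n-2k+9/4)^{-1/2}$ shows $g$ is non-decreasing whenever $n-2k+9/4\geq 1$, which is automatic on the relevant range $k\leq n/2$; hence $\cost(n,k)\geq g(k)\geq g(1)=\cost(n,1)$ for $k\geq 2$. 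When $\cost(n,1)=\infty$, the same chain with $g(2)=\sqrt{n-7/4}+\tfrac12$ still exceeds $\sqrt{n}-1$ by an elementary squaring check for $n\geq 3$. The final bound $\cost(n)>\sqrt{n}-1$ follows by minimising over $k$. The main bookkeeping headache I anticipate is tracking strictness against the sporadic integer equality cases of the first bound, but this is a short case check rather than a structural obstacle.
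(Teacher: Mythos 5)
Your argument is, at its core, the same convex--geometric idea as the paper's: the admissible size vectors lie on the sphere $\sum_j(s_j-\tfrac12)^2=n+\tfrac k4$ inside the polytope $\{s_j\ge 2\}$, the cost is an affine function, and the constrained extremum is governed by the point with $k-1$ coordinates equal to $2$. Where you differ is in how that extremal point is justified: the paper sweeps lines of slope $-1$ across the circle for $k=2$ and then asserts the higher-dimensional analogue, whereas you fix the level $C=\sum_j s_j$ and note that the convex function $\sum_j s_j^2$ is maximised at a vertex of the simplex $\{\sum_j s_j=C,\ s_j\ge2\}$, yielding the necessary condition $(C-2k+2)^2+4(k-1)\ge n+C$ and hence the bound. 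This is a cleaner and more rigorous route to the same inequality, since it actually proves the claim (only asserted in the paper) that the extremal configuration has $k-1$ coordinates equal to $2$; likewise your monotonicity argument for $g(k)=\sqrt{n-2k+\tfrac94}+k-\tfrac32$ handles the ``moreover'' clause more transparently than the paper's ``routine to check.''

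One caveat: your handling of strictness does not work, although the defect originates in the statement rather than in your method. When $4(n-2k)+9$ is a perfect square, the tuple $(s,2,\dots,2)$ with $s=\tfrac{1}{2}(1+\sqrt{4(n-2k)+9})$ is realisable and attains the bound exactly (for instance $n=14$, $k=2$: classes of sizes $4$ and $2$ give $\cost(14,2)=4=\sqrt{14-4+\tfrac94}+\tfrac12$), so in those cases the first inequality is genuinely an equality and no ``direct inspection'' can restore strictness; the paper's own proof likewise only establishes $\alpha_0$ as a weak lower bound. This does not propagate: since $g$ is strictly increasing in $k$ on the range $k\le n/2$, the weak bound $\cost(n,k)\ge g(k)$ still yields the strict inequalities $\cost(n,k)>g(1)=\cost(n,1)$ and $\cost(n)>\sqrt n-1$, which are the only parts of the lemma used downstream. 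You should also replace ``non-decreasing'' by ``strictly increasing'' in the chain $\cost(n,k)\ge g(k)\ge g(1)$; as written it only delivers $\ge$ rather than the claimed $>$.
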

\begin{proof}
We need only consider the case where the cost is finite. In the case that $k=1,$ there must be a natural number $t$ so that $n=t(t-1).$ Moreover, the cost of this equivalence relation will be exactly $t-1.$ Now, we see that $\sqrt n<t,$ so $\sqrt{n} - 1<t-1,$ which is the cost of the equivalence relation, and the claim is proved for the case $k=1$.

In the case that $k=2,$ we must have natural numbers $x$ and $y$ so that $n=x(x-1)+y(y-1),$ where there is an equivalence class of exactly $x$ elements and one of exactly $y$ elements. Arguing as above, the total cost is $f(x,y) = (x-1)+(y-1).$ So for every equivalence relation of size $n$ with exactly two non-singleton equivalence classes of sizes $x$ and $y$, we have a pair $(x,y)$ satisfying the equation
\[x^2-x+y^2-y=n.\]
By completing the square, we see that the solution set corresponds to points on a circle given by the equation
\[\left(x-\frac{1}{2}\right)^2+\left(y-\frac{1}{2}\right)^2 = n+\frac{1}{2},\]
which is a circle of radius $\sqrt{n+(1/2)},$ centered at $\left(\frac{1}{2},\frac{1}{2}\right).$ Call this circle $C_n.$ Now we notice that any relevant equivalence relation with cost $\alpha$ will satisfy $\alpha = f(x,y) = x+y-2.$ For a fixed value of $\alpha,$ this gives the equation of a line, which we call $L_\alpha.$ Without loss of generality, we can assume $x\leq y.$ Since there are exactly two non-singleton equivalence classes, we know that both $x$ and $y$ are at least 2. Putting this all together, we see that the pair $(x_0,y_0)$ where $x_0=2$ and $y_0$ satisfies $n=2(1)+y_0(y_0-1)$ will give a point of intersection of the relevant circle and line. Solving this for $y_0$ we get $y_0=\sqrt{n-(7/4)}+(1/2).$ Note that this value of $y_0$ may not be a natural number, meaning that this is not a possible size for an equivalence class. We will see that this still provides a lower bound for $\cost(n,2).$ Define $\alpha_0$ by computing
\[f(x_0,y_0) = x_0+y_0-2 = 2+\left(\sqrt{n-\frac{7}{4}}+\frac{1}{2}\right) - 2 = \sqrt{n-\frac{7}{4}}+\frac{1}{2}=\alpha_0.\]
Since $f$ is symmetric in $x$ and $y$, we see that $f(x_0,y_0)=f(y_0,x_0)=\alpha_0.$ So we have the coordinates for both points where the line $L_{\alpha_0}$ could possibly intersect the circle $C_n.$ Notice that any line of the form $f(x,y)=\alpha$ must have slope $-1$. This means any such line is perpendicular to the line containing the origin and $\left(\frac{1}{2},\frac{1}{2}\right),$ the center of the circle $C_n.$ From this we see that for any $\beta<\alpha_0,$ the corresponding line $L_\beta$ would intersect the circle $C_n$ in a point with a coordinate strictly less than two, the coordinates of that point could not correspond with the sizes of non-singleton equivalence classes. Similarly, any other line with slope $-1$ would intersect the circle $C_n$ in points whose coordinates $(x,y)$ are both strictly greater than two, but would then give a value of $f(x,y)$ strictly greater than $\alpha_0.$ So we conclude that $\alpha_0$ is a lower bound for $\cost(n,2)$ (see diagram).
\begin{figure}
\includegraphics[scale=1.25]{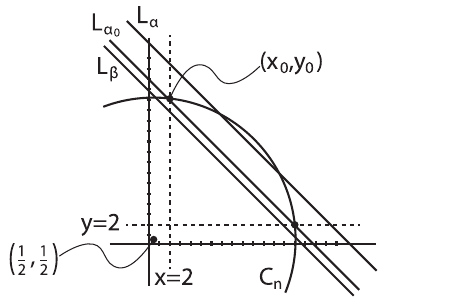}
\caption{Here we see the circle $C_n$ centered at $\left(\frac{1}{2},\frac{1}{2}\right)$, and $L_{\alpha_0}$ intersecting $C_n$ at $(x_0,y_0)$ and $(y_0,x_0).$ Also pictured are examples of lines $L_\alpha$ with $\alpha>\alpha_0,$ and $L_\beta$ with $\beta<\alpha_0.$ }
\end{figure}

For larger values of $k$, recall that $k$ is finite, so we can appeal to higher, yet still finite-dimensional arguments. Suppose $\vec x = (x_1, \dots, x_k),$ where the $x_j$ are the sizes of the $k$ different non-singleton equivalence classes. The argument is essentially the same as the $k=2$ case, except we now have a sphere in place of a circle, whose center has coordinates all $\frac{1}{2},$ and hyperplanes in place of lines, all normal to the line through the origin and the center of the sphere. The equation of the sphere will be
\[\left(x_1-\frac{1}{2}\right)^2+\left(x_2-\frac{1}{2}\right)^2+\dots+\left(x_k-\frac{1}{2}\right)^2 = n+\frac{k}{4},\]
and the relevant hyperplanes will be defined by
\[f(x_1,x_2,\dots,x_k) = (x_1-1)+(x_2-1)+\dots+(x_k-1) = \alpha.\]
The minimum over the reals would then occur whenever $(k-1)$ of the coordinates are exactly $2$ (the minimum possible value for any $x_j$), and the last coordinate is $n-2(k-1).$ Following the higher-dimensional analog through, we find the hyperplane going through the point with coordinates
\[\left(2,2, \dots, 2, \sqrt{n-2k+\frac{9}{4}}+\frac{1}{2}\right),\]
and call it $H_{\alpha_0}.$ As before, we compute
\[\alpha_0 =  2(k-1)+\left(\sqrt{n-2k+\frac{9}{4}}+\frac{1}{2}\right)-1 = \sqrt{n-2k+\frac{9}{4}}+2(k-1)-\frac{1}{2}.\]
Again, we see that parallel hyperplanes closer to the origin than $H_{\alpha_0}$ will have some coordinates smaller than two, and parallel hyperplanes farther from the origin will correspond with a cost larger than $\alpha_0.$ So we get that $\alpha_0$ is a lower bound for $\cost(n,k).$

Finally, it is routine to check that in the range $n\geq 2k > 0$ that the lower bound for $\cost(n,k)$ is greater than the lower bound for $\cost(n,1)$ and greater than $\sqrt{n}-1.$ To see this, notice that if $n$ can be written as $t(t-1)$, then for any $k>1$, we will have 
\[\sqrt{n-2k+\frac{9}{4}}+k-\frac{3}{2} > t-1 > \sqrt{n}-1.\]
Notice that the left-hand side will still be strictly greater than the right-hand side even if the middle term is not present. Since this implies that $\cost(n,k)>\sqrt{n}-1$ for all $k$, $\cost(n)$ is just the minimum over all $k,$ and there are only finitely many choices for $k,$ we also get that $\cost(n)>\sqrt{n}-1.$
\end{proof}
We note that, in light of \Cref{costProp}, the upper bound of
\eqref{bounds}
also gives the slightly better lower bound of
$\cost(n)\ge \frac{1}{2}\left(-1+\sqrt{4n+1} \right)$, but we will use the lower bound of the
lemma for simplicity in future calculations.

One immediate implication of \Cref{costLB} is the following.
\begin{cor}\label{nRhombic}
If $n=t(t-1),$ for a natural number $t,$ then $\cost(n)=t-1,$ and any equivalence relation realizing this bound must have exactly one non-singleton equivalence class of size $t.$
\end{cor}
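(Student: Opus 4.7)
The plan is to read the corollary as an almost immediate consequence of Lemma \ref{costLB}, which was set up precisely to yield results of this form. I would split the argument into three short pieces: an explicit construction giving an upper bound on $\cost(n)$, a uniqueness argument in the $k=1$ case, and a strict separation argument for $k\geq 2$ using the quantitative lower bounds already proved.

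First, I would exhibit the equivalence relation consisting of one non-singleton class of size $t$ (together with singletons). Its size is $t(t-1)=n$ and, as noted in the discussion preceding the corollary, its cost is $t-1$. This shows $\cost(n,1)\le t-1$, and hence $\cost(n)\le t-1$.

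Next, I would treat $k=1$ separately. An equivalence relation of size $n$ with a single non-singleton class of size $s$ must satisfy $s(s-1)=n=t(t-1)$, and since the polynomial $X(X-1)$ is strictly increasing on positive integers, this forces $s=t$ and hence a cost of exactly $t-1$. In particular $\cost(n,1)=t-1$ and any optimizer at $k=1$ has the claimed structure.

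The key remaining step is to rule out that $\cost(n,k)$ can also reach the value $t-1$ for some $k\ge 2$. Here I would apply the lower bound from \Cref{costLB}, which guarantees
\[
\cost(n,k) > \sqrt{n-2k+\tfrac{9}{4}}+k-\tfrac{3}{2}
\]
for every $k\ge 2$, and then invoke the ``routine check'' explicitly stated at the end of that lemma: when $n=t(t-1)$ and $k>1$, this lower bound strictly exceeds $t-1$. (If I had to verify this step by hand, I would set $n=t(t-1)$, move $(t-k+\tfrac{1}{2})$ to one side, square if positive, and reduce to $2t(k-1)>(k-1)(k+2)$, i.e.\ $k<2t-2$; the complementary case $k\ge t+1$ is handled trivially because the right-hand side becomes non-positive while the square root is positive.) Combining the three steps yields $\cost(n)=\cost(n,1)=t-1$, uniquely realized by one equivalence class of size $t$.

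The only place any real work happens is the strict inequality $\sqrt{n-2k+9/4}+k-3/2>t-1$ for $k\ge 2$, and this is already spelled out inside Lemma \ref{costLB}; so I do not anticipate any serious obstacle.
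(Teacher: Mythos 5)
Your proposal is correct and follows essentially the same route as the paper: the paper's proof likewise observes that $\cost(n,1)=t-1$ is finite and then invokes the final assertion of Lemma \ref{costLB} (that $\cost(n,k)>\cost(n,1)$ for all $k>1$ when $\cost(n,1)$ is finite) to conclude both the value of $\cost(n)$ and the uniqueness of the optimal structure. Your extra hand-verification of the inequality $\sqrt{n-2k+9/4}+k-3/2>t-1$ merely unpacks the ``routine check'' already recorded inside that lemma.
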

\begin{proof}
We have $\cost(n,1)=t-1$
as $n=t(t-1)$. In particular, $\cost(n,1)$ is finite, so that \Cref{costLB}
implies $\cost(n,k)>\cost(n,1)$ for all $k>1$. Thus, $\cost(n)=t-1$ and this can
only be achieved if there is exactly one non-singleton
equivalence class of size $t$.
\end{proof}
We now prove our main result for this section, perhaps the main result of the
paper.
\begin{thm}\label{nonTriCost}
Fix $n\ge 2$ be even and let $\sim$ be an equivalence relation on a finite set
$A$ such that
\begin{itemize}
\item $|| \sim ||=n$.
\item $\cost(\sim)=\cost(n)$, so that $\sim$ has minimal total cost among
all such equivalence relations.
\end{itemize}
Then $\sim$ must have an equivalence class of size $\rhombicfl{n}$, unless
\begin{equation*}
n\in\X:=\left\{
\begin{array}{cccccccc}
24,
&40,
&50,
&52,
&86,
&88,
&120,
&128,\\
174,
&180,
&198,
&238,
&266,
&268,
&296,
&300,\\
378,
&414,
&534,
&690,
&740,
&866,
&922,
&980,\\
982,
&1188,
&1254,
&1256,
&1692,
&1962,
&2136,
&2344,\\
2438,
&2440,
&2632,
&2966,
&2968,
&3180,
&3780,
&3870,\\
4530,
&4662,
&5384,
&5528,
&6312,
&6468,
&10280,
&11550,\\
11762,
&20574,
&22950&&&&&
\end{array}
\right\}.
\end{equation*}
\end{thm}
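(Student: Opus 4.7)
The plan is to combine the explicit bounds of Lemmas \ref{costUB} and \ref{costLB} with an exchange argument. Set $t := \rhombicfl{n}$ and $d := n - t(t-1)$, so $0 \le d \le 2t-2$ by Lemma \ref{sBounds}, and note that no class of $\sim$ can have size exceeding $t$ since $(t+1)t > n$. Assuming for contradiction that an optimal $\sim$ realizing $\cost(n)$ has no class of size $t$, every class of $\sim$ must then have size at most $t-1$.

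First I would record the greedy upper bound $\cost(n) \le (t-1) + \cost(d)$, obtained by using one class of size $t$ and handling the residue $d$ optimally; by Lemma \ref{costUB} applied to $d$, this gives $\cost(n) \le (t-1) + (-3+\sqrt{9+12d})/2$, which grows like $t + O(\sqrt{t})$. For a matching lower bound on $\cost(\sim)$ under the restriction that every class of $\sim$ has size at most $t-1$, I would revisit the convex-geometry argument of Lemma \ref{costLB}, now incorporating the box-constraint $s_i \le t-1$. Under this constraint, the minimum of $\sum(s_i - 1)$ on the sphere $\sum(s_i - 1/2)^2 = n + k/4$ is forced onto configurations with several coordinates at or near $t-1$, raising the bound by an amount growing with $t$. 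Comparing this refined lower bound with the greedy upper bound would yield an explicit threshold $t_0$ beyond which no optimal $\sim$ can avoid a class of size $t$.

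For $n$ with $\rhombicfl{n} < t_0$, only finitely many cases remain. I would handle these by a systematic local exchange analysis. The guiding observation is that many local configurations admit strict improvements; for instance, two classes of size $3$ (mass $12$, cost $4$) can be replaced by one class of size $4$ (mass $12$, cost $3$), strictly decreasing cost. Iterating such swaps reduces any optimal $\sim$ to one of a short list of ``irreducible'' canonical forms, and the exceptions listed in $\X$ are precisely those $n$ for which an irreducible form ties the greedy cost without containing a class of size $t$.

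The principal obstacle is that a direct application of Lemmas \ref{costUB} and \ref{costLB} is not sharp enough to eliminate the exceptional set: for instance at $n = 24$ (where $t=5$ and $d=4$) the lower bound of Lemma \ref{costLB} evaluates to about $5.2$, below the greedy cost of $6$, so this case is not ruled out asymptotically and is indeed exceptional (the configuration of two classes of size $4$ also realizes cost $6$). The finite enumeration for $n$ up to the threshold (which must exceed $22950$ to cover all listed exceptions) is a substantial bookkeeping task, and I expect the catalog $\X$ to require a computer-assisted exhaustive search to certify, with the asymptotic bounds guaranteeing no exception is missed beyond the threshold.
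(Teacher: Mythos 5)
Your overall architecture --- an asymptotic argument producing an explicit threshold $t_0$, followed by a finite computer-assisted enumeration below it --- is the same as the paper's, and your greedy upper bound $\cost(n)\le (t-1)+\cost(d)$ is exactly the paper's starting point. However, there is a genuine quantitative gap in the asymptotic half. You propose to beat the greedy upper bound $(t-1)+\tfrac{-3+\sqrt{9+12d}}{2}$ with a box-constrained version of Lemma \ref{costLB}. But when $d$ is near its maximum $2t-2$, forbidding classes of size $t$ forces (for large $t$) at most one class of size $t-1$ plus a residue of mass $d+2t-2$, so the constrained lower bound is about $(t-2)+\sqrt{d+2t-2}-1\approx t+2\sqrt{t}$, whereas your upper bound is about $(t-1)+\sqrt{3d}\approx t+\sqrt{6t}\approx t+2.45\sqrt{t}$. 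The lower bound never overtakes the upper bound, for any $t$, so no finite threshold emerges from this comparison. The defect is that Lemma \ref{costUB} overestimates $\cost(d)$ by $\Theta(\sqrt{d})=\Theta(\sqrt{t})$, which swamps the $\Theta(\sqrt{t})$ gain from the box constraint (with a smaller constant). The missing idea, which the paper supplies, is to iterate the rhombic decomposition one more level: write $d=s(s-1)+d'$ with $s=\rhombicfl{d}$ and $d'\le 2s-2=O(\sqrt{d})$, so that $\cost(d)\le (s-1)+\cost(d')=\sqrt{d}+O(d^{1/4})$; only then do the leading $\sqrt{d}$ terms cancel and the comparison close. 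The paper also does not literally use a box-constrained sphere minimization; it parametrizes competitors by the deficit $k$ of their largest class (size $t-k$), disposes of $k>\sqrt{d}$ by the crude estimate $\cost\ge n/(t-k)$, and then splits $1\le k\le 8$ from $k\ge 9$, in each range bounding $d$ explicitly ($d\le 1420$ and $d\le 48$ respectively) and then bounding $t$. Your finite-case plan (local exchanges) is also vaguer than what is needed: certifying the exact list $\X$ up to the paper's threshold of $2574420$ requires computing $\cost(m)$ exactly for all smaller even $m$ (the paper uses a recursive algorithm), since several exceptions such as $n=40$ are ones where the greedy value is strictly beaten, not merely tied. Your diagnosis that the exceptional set cannot be eliminated asymptotically (e.g.\ $n=24$) is correct.
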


We pause to note that the restriction of dealing with finite underlying sets is entirely superficial, as all of the relevant quantities we manipulate are finite, however rather than teasing out this technicality, we add the assumption of finiteness for ease of exposition, as well as because the primary application of this result in the present paper is dealing with finite sets. We also point out that this exceptional set $\mathcal X$ is indeed necessary. That is, our methods yield exact results, and $\mathcal X$ is a complete list of cases where the greedy approach does not uniquely determine the minimal cost.

\begin{proof}
Assume for now that $n\ge 2574420=1605\times 1604$. Our goal is to show that
the minimal total cost must arise from an equivalence relation with an
equivalence class of the maximal possible size.
Set $t=\rhombicfl{n}$ and let $d=n-t(t-1)$.
We have $0\leq d \leq 2t-2$ by \Cref{sBounds}.
Moreover, by our assumption on the size of $n$, we know that $t\geq 1605.$
Notice that in the case that $d=0,$ we are done by \Cref{nRhombic}. 
We need to deal with the case where $d> 0.$

The basic plan will be to show that if an arbitrary equivalence relation $\equiv$ of size $n$ does not have an equivalence class of size $t,$ then it must have a cost strictly greater than $\cost(n).$ The difficulty lies in the fact that, as already noted, this does not always hold for small $n$. To find a sufficient threshold, we write $\cost(n)$ in terms of $\cost(d),$ and relate this to the cost of $\equiv.$ We then assume the conclusion of the result fails, and this will give us an upper bound on $d.$ We pause here to note that an upper bound on $d$ on its own would not be enough to prove the claim for large $n$, as there are arbitrarily large values of $n$ whose corresponding $d$ will be less than any positive bound. However, due to the relationship between $\cost(d)$ and the cost of $\equiv,$ we can eventually show that for sufficiently large $n$, the conclusion must hold. In some sense, we are separating out the long-term growth of the cost function from its short-term fluctuations.

Define $\mathcal E(n)$ to be the set of equivalence relations on $A$ of size $n.$
Further, for an arbitrary integer $1\le j\le t$, let $\mathcal E_j(n)$ denote the
equivalence classes from $\mathcal E(n)$ having a largest equivalence class of
size $j$.
Choose $\sim$ to be an equivalence relation chosen from $\mathcal E_t(n)$ to
have minimal cost out of all equivalence relations in $\mathcal E_t(n),$ which
can be done as all of the relevant sets involved are finite. As $\sim$ has an
equivalence class of size $t,$ the contribution to $\cost(\sim)$ by that
class is $t-1$. Since $n>4,$ by the definitions of $t$ and $d,$ there is unique equivalence class of size $t$ in $\sim.$ Therefore each of the remaining $d$ related ordered pairs of
distinct elements must be in smaller equivalence classes, and therefore have a combined cost at most $\cost(d)$. Since $\sim$ was chosen to have minimal cost out of all equivalence relations in $\mathcal E_t(n),$ this shows
\begin{equation}\label{simCost}
\cost(\sim)=(t-1)+\cost(d).
\end{equation}
Next, for any natural number $k,$ if possible, select an arbitrary $\approx_k\in\mathcal E_{t-k}(n).$ That is, $\approx_k$ is an equivalence relation whose largest non-singleton equivalence class has size $t-k$ for some $k \geq 1,$ meaning that there is a difference of $k$ elements between the largest possible size of an equivalence class and the largest equivalence class in this particular relation. Note that for $\approx_k$ to exist, this implies that
\[t-k \geq 2 \Rightarrow k \leq t-2,\]
because $t-k<2$ would imply that this largest equivalence class was a singleton. However, if we are considering an equivalence relation that could potentially have minimal cost, we can get more control on $k.$ Specifically, we will initially look for a threshold value, $k_0$, so that for all $k\geq k_0,$ the conclusion holds by a fairly simple argument, then use a different argument for smaller choices of $k.$ To this end, start with some $k>0.$ Each equivalence of distinct elements $x\approx_k x'$ must come from an equivalence class of size at most $t-k,$ and therefore contribute $w(x,x')\geq(t-k)^{-1}$ to the cost. So we have
\[\cost(\approx_k)\geq n\cdot\left(\frac{1}{t-k}\right) = \frac{n}{t-k}.\]
If $\cost(\approx_k)>\cost(\sim),$ then we are already done for that value of $k$ and larger. That is, for large enough $k$, the conclusion of the theorem holds without arguing further. We check to see which values of $k$ will already push $\approx_k$ to have too high a cost with this rough estimate. By appealing to \eqref{simCost} and \Cref{costLB}, we get
\[\cost(\approx_k)\geq \frac{n}{t-k}>\cost(\sim) = (t-1)+\cost(d) > (t-1) + \sqrt{d} -1.\]
Notice this holds whenever
\[n>\left(t+\sqrt{d}-2\right)(t-k)=t^2 +t\sqrt{d}-2t -k\left(t +\sqrt{d}-2\right).\]
Recalling $n=t(t-1)+d,$ we have
\[(t^2-t+d)-t^2-t\sqrt{d}+2t > k\left(2-t-\sqrt{d}\right).\]
Simplifying and multiplying by $-1$ yields
\[t\sqrt{d}-t-d < k \left(t+\sqrt{d}-2\right),\]
which we use to define the threshold value $k_0,$ by
\[k> \frac{t\sqrt{d}-t-d}{t+\sqrt{d}-2}=:k_0.\]
While this is a rather intricate expression, it implies that for any $k>\sqrt{d}$
\begin{equation}\label{kSmall}
k> \sqrt{d}>k_0.
\end{equation}
Now that we have some idea about the range of $k$ where we need to work, we follow the logic that lead to \eqref{simCost} above to see that any equivalence class with size $(t-k)$ will contribute $(t-k-1)$ to the cost of $\approx_k.$ Recall that we want to compare the cost of the equivalence relation $\approx_k$ to that of $\sim,$ so now that we have accounted for the cost contribution by a maximal equivalence class in $\approx_k,$ we next need to count how many remaining equivalences there are to estimate the rest of its cost, in addition the $d$ equivalences already left over in $\sim.$ We do this by noting
\[t(t-1) - (t-k)(t-k-1) = (t^2-t)-(t^2-2tk+k^2-t+k)=2kt-k^2-k.\]
Recalling that there are still $d$ other equivalences that are as yet unaccounted for, this gives us the following estimate for the cost of $\approx_k.$
\[\cost(\approx_k)\geq (t-k-1) +\cost(d+ 2kt-k(k+1)).\]
We want to show that $\cost(\approx_k)> \cost(\sim).$ By combining the estimate above with \eqref{simCost}, this gives us the goal of showing
\[(t-1)+\cost(d) < (t-k-1) +\cost(d+2kt-(k+1)k)\]
\begin{equation}\label{kGoal}
\cost(d) < \cost(d+2kt-k(k+1))-k.
\end{equation}
Notice that \eqref{kGoal} does not always hold in general, as there are exceptions for small values of $n$. It should not be surprising that it holds for large values of $n$, as the cost of $d$ on the left-hand side should stay relatively small compared to the cost term on the right-hand side as the parameter $t$ grows. However, there is a confounding loss term of $k$ on the right-hand side as well. Since all of the parameters involved can grow in $n$, a direct approach does not appear to work. Instead, we will tease out that this indeed holds as $n$ grows sufficiently large by handling different ranges of $k$ with slightly different arguments. In order to guarantee \eqref{kGoal} holds for $n$ sufficiently large, we will assume it fails, and for a given $k$, get conditions on $d$ or $t$ necessary (though potentially not sufficient) for \eqref{kGoal} to fail. In particular, suppose that the opposite holds. That is, suppose
\begin{equation}\label{kFail}
\cost(d) \geq \cost(d+2kt-k(k+1))-k.
\end{equation}
We will break $d$ down further by defining $s=\rhombicfl{d}$, and letting $d'$ satisfy $d=s(s-1)+d'.$
By \Cref{sBounds}, we have $0\leq d' \leq 2s-2$ and
$$\sqrt{d}-1 < s < \sqrt{d}+1.$$
Following the reasoning above, and appealing to \Cref{nRhombic}, we
estimate
\[ \cost(d) \leq \cost(s(s-1))+\cost(d') = s-1 + \cost(d').\]
Combining this with \eqref{kFail}, and applying \Cref{costLB}, we get
\[s-1 +\cost(d') \geq \cost(d) \geq \cost(d+2kt-k(k+1))-k > \sqrt{d+2kt-k(k+1)}-1 -k,\]
which simplifies to
\[\cost(d') > \sqrt{d+2kt-k(k+1)} - k - s.\]
We then apply \Cref{costUB} to get
\[\frac{-3+\sqrt{9+12d'}}{2} \geq \cost(d') > \sqrt{d+2kt-k(k+1)} - k - s,\]
which simplifies to
\begin{equation}\label{splitBound}
\sqrt{9+12d'} > 2\sqrt{d+2kt-k(k+1)} - 2k - 2s+3.
\end{equation}

We are looking for a range of parameters that could potentially satisfy \eqref{splitBound}. The upper bound on $s$ from \Cref{sBounds} yields
\[\sqrt{9+12d'} > 2\sqrt{d+2kt-k(k+1)} - 2k - 2s+3 > 2\sqrt{d+2kt-k^2-k} - 2k - 2\left(\sqrt{d}+1\right)+3.\]
Since $d\leq2t-2$, this is bounded below by
\[\sqrt{9+12d'} > 2\sqrt{d+(2kt-2k)-k^2+k} - 2k - 2\sqrt{d}+1\geq 2\sqrt{d+kd-k(k-1)} - 2k - 2\sqrt{d}+1.\]
We then note that $\sqrt{d}\geq\sqrt{d-(k-1)}$, to get
\[\sqrt{9+12d'} > 2\sqrt{d+kd-k(k-1)} - 2k - 2\sqrt{d-(k-1)}+1.\]
Again using \Cref{sBounds}, we have $d' < 2\sqrt{d}$, so that
\[\sqrt{9+24\sqrt{d}} > \sqrt{9+12d'} >2\sqrt{d+kd-k(k-1)} - 2k - 2\sqrt{d-(k-1)}+1\]
\begin{equation}\label{kSplit}
\sqrt{9+24\sqrt{d}} > 2\sqrt{d+k(d-(k-1))} - 2k - 2\sqrt{d-(k-1)}+1.
\end{equation}
At this point we will split our problem into cases based on $k.$ Now, if $k\geq 9$, we ignore one of the $d$ terms under the first square root on the right-hand side, which we can do with a lower bound, and obtain
\[\sqrt{9+24\sqrt{d}} > 2\sqrt{k(d-(k-1))} - 2k - 2\sqrt{d-(k-1)}+1> 2\left(\sqrt{k}-1\right)\sqrt{d-(k-1)} - 2k+1.\]
Applications of the fact that $k\geq 9$ and \eqref{kSmall} give us
\[\sqrt{9+24\sqrt{d}}>4\sqrt{d-(k-1)} - 2k+1>4\sqrt{d-\sqrt{d}+1}-2\sqrt{d} +1.\]
Solving with a computer will yield that $d<49.9822\dots$, which implies $d\leq 48$ as $t$ must be an even integer. We can explicitly calculate the maximum possible cost of any equivalence relation with $d\leq 48$,  and plug these into \eqref{kGoal} to get bounds on $t.$ Checking by hand, we see that for $d\leq 48,$ we always have $\cost(d)\leq 8,$ so we look for bounds on $t$ satisfying
\begin{equation}\label{dPrimeLarge1}
8 \leq \cost(d) < \cost(d+2kt-k(k+1))-k.
\end{equation}
Appealing to \eqref{kSmall}, and recalling the range of $k$ we are currently analyzing, we can bound $9\leq k\leq \sqrt{d},$ so the argument of the cost function in \eqref{dPrimeLarge1} is
\[d+2kt-k(k+1) \geq d+2(9)t -\sqrt{d}\left(\sqrt{d}+1\right) \geq 18t-\sqrt{d}>18t.\]
So we apply \Cref{costLB} to the right-hand side of \eqref{dPrimeLarge1} and combine with the above calculation to get
\[\cost(d+2kt-k(k+1))-k \geq \sqrt{d+2kt-k(k+1)}-1-k >\sqrt{18t}-k-1=3\sqrt{2t}-k-1.\]
We lean on \eqref{kSmall} again to see that $k\leq \sqrt{d} \leq \sqrt{2t}.$ Therefore, we can be assured that \eqref{kGoal} holds whenever \eqref{dPrimeLarge1} does, which is when
\[\cost(d+2kt-k(k+1))-k> 3\sqrt{2t}-\sqrt{2t}-1 = 2\sqrt{2t} -1\geq 8.\]
This happens whenever $t\geq10.125,$ which we are guaranteed by the assumption on the size of $n.$

We now consider \eqref{kSplit} when $1\leq k \leq 8$:
\[\sqrt{9+24\sqrt{d}} > 2\sqrt{d+k(d-(k-1))} - 2k - 2\sqrt{d-(k-1)}+1.\]
For this range of $k$, we check each case separately by computer and see that for each value of $k$, there is a maximum value of $d$ for which the inequality can hold. Out of each of these values of $k$, the maximum possible value for $d$ to satisfy this inequality occurs when $k=1,$ which has a maximum for $d$ at no more than $1421.$ 
By again following the same general strategy used to analyze \eqref{dPrimeLarge1}, but with these possible values for $k$, we see that we need to compute the minimum cost of any equivalence relation of size $d<1421.$ Since $d$ must be even, we get that $d\leq 1420.$
A simple recursive algorithm can be set up to compute all values of $\cost(d)$
for relatively small $d$, and this shows that $\cost(d)\le 47$ for all
$d\le 1420$, with equality occurring only for $d\in\{1398,1402\}$.
This means that we can realize \eqref{kGoal} as long as the following
inequality holds, which will serve a purpose similar to that of
\eqref{dPrimeLarge1} above:
\begin{equation}\label{dPrimeLarge2}
47 < \cost(d+2kt-k(k+1))-k.
\end{equation}
Again recalling the range of $k$ in the current case, $1\leq k\leq 8,$ we see the argument of the cost function in \eqref{dPrimeLarge2} is
\[d+2kt-k(k+1) \geq d+2t -(8)(9) \geq 2t-72.\]
Similar to before, we apply \Cref{costLB} to the right-hand side of \eqref{dPrimeLarge2} and once again combine with the above estimate and range on $k$ in this case to obtain
\[\cost(d+2kt-k(k+1))-k \geq \sqrt{d+2kt-k(k+1)}-1-k >\sqrt{2t-72}-9.\]
Again, we can be assured that \eqref{kGoal} holds whenever \eqref{dPrimeLarge2} does, which is when
\[\cost(d+2kt-k(k+1))-k> \sqrt{2t-72}-9 \geq 47.\]
This happens whenever $t>1604,$ which we are again guaranteed by our range on $n,$ which is at least $2574420=1605\times 1604$ by assumption.

One can now return to the same recursive algorithm mentioned above to compute
the minimal examples for all $n < 2574420$ easily enough, and
confirm the statement for all $n$ not listed among the 51 exceptions of $\X$.
The exceptions fall into two categories. If $n\in \{40,238,922,1188,2344\}$,
there exists an equivalence relation with minimal total cost that is 1 less
than that which can be achieved from having an equivalence class of size
$\rhombicfl{n}$.
For the remaining exceptions, there are equivalence relations with minimal
total cost equal to that which can be achieved from having an equivalence
class of size $\rhombicfl{n}$.
\end{proof}
We now apply \Cref{nonTriCost} to get the following result, which describes the minimal possible cost for any equivalence relation on a finite set.
The point is that, apart from the small number of exceptions, greedily using up
as many equivalences as we can at any given step in the process will yield the
minimal total cost.
Indeed, once an equivalence class of maximal size is fixed, we can restrict our
consideration to the sub-equivalence relation (in the Cartesian product sense)
and reapply \Cref{nonTriCost}.
Modifying the larger classes will only increase the cost, so we can move on to smaller equivalence classes.
\begin{thm}\label{cobbleCost}
Let $A$ be a finite set and $\sim$ be an equivalence relation on $A$ with
$|| \sim ||=n$.
Set $e_1=\rhombicfl{n}$, and recursively define the finite sequence of
integers $\{e_j\}$ by
\[e_j = \rhombicfl{ n - \sum_{i<j} e_i (e_i-1)}.\]
If all of the $e_j$ are not among the set $\mathcal X' :=\{40, 238, 922, 1188, 2344 \}$, then
$\cost(\sim)\ge \sum_j (e_j-1)$, and in particular,
$\cost(n) = \sum_j (e_j-1)$. If any one of the $e_j$ is in $\mathcal X',$ then it will be the only such exception, and $\cost(n) = -1+\sum_j (e_j-1).$
\end{thm}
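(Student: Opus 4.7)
The plan is to prove Theorem \ref{cobbleCost} by strong induction on $n$, using Theorem \ref{nonTriCost} as the engine of the recursion. The base case $n=0$ is vacuous: the greedy sequence is empty and both sides of the claimed identity are $0$. For the inductive step, set $e_1=\rhombicfl{n}$ and $d=n-e_1(e_1-1)$, so $d<n$ and the inductive hypothesis applies to any equivalence relation of size $d$.

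The main step is a case split based on where $n$ sits relative to the exceptional set of Theorem \ref{nonTriCost}. If $n\notin\mathcal X$, that theorem forces every minimum-cost equivalence relation of size $n$ to contain a class of size exactly $e_1$; peeling off such a class uses $e_1(e_1-1)$ equivalences, contributes $e_1-1$ to the cost, and leaves a sub-equivalence-relation on the complementary elements of size $d$ whose minimum cost is $\cost(d)$ by induction. Conversely, combining an optimal relation of size $d$ with a single class of size $e_1$ realizes the bound, so $\cost(n)=(e_1-1)+\cost(d)$. For $n\in\mathcal X\setminus\mathcal X'$, the classification established in the proof of Theorem \ref{nonTriCost} guarantees that a minimum-cost configuration containing a class of size $e_1$ still exists, so the same identity is obtained. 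For $n\in\mathcal X'$, the same classification asserts that the minimum cost is exactly one less than what any configuration with a class of size $e_1$ achieves, giving $\cost(n)=(e_1-1)+\cost(d)-1$. Unwinding the recursion through the inductive hypothesis on $d$ yields $\sum_j(e_j-1)$, or that quantity minus one when an exceptional remainder is encountered.

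The delicate point, and the main obstacle I anticipate, is controlling the $-1$ corrections so that at most one is accrued along the entire greedy trace; otherwise the recursion could compound two $-1$s and contradict the stated identity. Concretely, when $n\in\mathcal X'$ I must verify that none of the subsequent remainders $n_j=n-\sum_{i<j}e_i(e_i-1)$ for $j\ge 2$ falls back into $\mathcal X'$. Since $\mathcal X'$ has only five members, each at most $2344$, this reduces to a direct finite check: for each $n\in\{40,238,922,1188,2344\}$, compute the full greedy sequence of remainders and confirm that no $n_j$ with $j\ge 2$ lies in $\mathcal X'$. Lemma \ref{sBounds} gives $n_{j+1}\le 2e_j-2<2\sqrt{n_j}$, so after the first step the remainders shrink rapidly and the verification is routine. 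In the complementary case $n\notin\mathcal X'$ with a later $n_j\in\mathcal X'$, the inductive hypothesis on $d$ already carries exactly one $-1$ correction, and the recursion $\cost(n)=(e_1-1)+\cost(d)$ transports it up to the level of $n$ without introducing a second.
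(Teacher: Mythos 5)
Your proposal is correct and follows essentially the same route as the paper: peel off a maximal class of size $\rhombicfl{n}$, invoke Theorem \ref{nonTriCost} to justify that this is (or can be taken to be) optimal at each stage, and recurse on the remainder. If anything, your treatment is more careful than the paper's own proof, which only argues the non-exceptional case explicitly; your observation that one must verify by a finite check that no remainder $n_j$ with $j\ge 2$ re-enters $\mathcal X'$ (so that at most one $-1$ correction accrues) is exactly the point the paper leaves implicit and defers to the appendix computations.
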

\begin{proof}
Given a fixed $n$, \Cref{nonTriCost} guarantees that if $\sim$ has
minimal total cost, then by hypothesis the largest equivalence class must have
size $e_1=\rhombicfl{n}$.
This will account for $e_1(e_1-1)$ equivalences of ordered pairs of distinct
elements, altogether contributing $e_1-1$ to the total cost.
This leaves us with $n_1 = n-e_1(e_1-1)$ remaining equivalences of ordered
pairs of distinct elements. Denote the equivalence class of size $e_1$ by
$E_1$. We can again appeal to \Cref{nonTriCost}
applied to any equivalence relation on the set $A\setminus E_1$ to get that the
remaining portion of the total cost is minimized when the next largest
equivalence class has size $e_2 = \rhombicfl{n_1}$.
Since our hypothesis guarantees that no $e_j$ corresponds to an exception,
we can recursively generate the rest of the $e_j$ by repeatedly applying
\Cref{nonTriCost} to the remaining portions of $A$.
As the set $A$ is finite, this process will eventually terminate.
\end{proof}
In general, this sum is minimized when all of the equivalent pairs are between elements from the same equivalence class. This agrees with the putative sharpness example where $q=343$.
To summarize, to determine $B_k$, or an upper bound for $V(f)$ with
$f\in C_4(A)$, a greedy algorithm approach will almost always produce the
largest image size, so long as we don't encounter one of the exceptions.
While this seems intuitively obvious, the exceptions make this somewhat
involved.
Morover, while the problem might appear to be within the reach of standard
discrepancy techniques, the finite but sparse list of exceptions seem to
confound such approaches. One reason this occurs is that
that the main term and the error term can both be of order $\sqrt{q}$ in
magnitude.

For completeness, we have given some additional information regarding the
exceptions in an appendix, including their exact costs.
We summarize the more important aspects:
\begin{itemize}
\item The only cases where $\cost(n)$ cannot be generated by an
equivalence relation with an equivalence class of size $\rhombicfl{n}$ are
$n\in\{40,238,922,1188,2344\}$. In each of these cases, $\cost(n)$ is one less
than the minimum cost of an equivalence relation with a maximal size equivalence
class, and is achieved with largest equivalence class of size
$\rhombicfl{n}-1$.

\item If $n\not\in\{40,238,922,1188,2344\}$, then $\cost(n)$ can be achieved by
an equivalence relation that has an equivalence class of the maximal size
$\rhombicfl{n}$.

\item For $n=128$, we have $\rhombicfl{128}=11$ and $\cost(128)=15$.
This cost can be achieved by an equivalence relation with just 2 non-singleton
equivalence classes of sizes 9 and 8, respectively.

\item For $n=180$, we have $\rhombicfl{180}=13$ and $\cost(180)=18$.
This cost can be achieved by an equivalence relation with just 2 non-singleton
equivalence classes each of size 10.

\item For $n=300$, we have $\rhombicfl{300}=17$ and $\cost(300)=23$.
This cost can be achieved by an equivalence relation with just 2 non-singleton
equivalence classes of sizes 15 and 10, respectively.

\item In all other exceptions, $\cost(n)$ can be achieved only by an equivalence
relation having a largest equivalence class of size 
$\rhombicfl{n}$ or of size $\rhombicfl{n}-1$.

\item There are 19 cases where $q$ is a prime and $q-1\in\X$. These
are
\begin{equation*}
q\in \left\{
\begin{array}{c}
41, 53, 89, 181, 199, 239, 269, 379, 691, 983, 1693,\\
2137, 2441, 2633, 2969, 3181, 4663, 6469, 11551
\end{array}
\right\}
\end{equation*}

\item There are 2 cases where $q$ is a non-trivial power with
$q-1\in\X$; namely, $q=25$ and $q=121$. In particular, if $q=y^n$ for some
$n\ge 3$, then $q-1\notin\X$.
\end{itemize}

\section{The general situation} \label{gensit}

We now want to give bounds on $V(f)$ with $f\in C_4(A)$ for general $q$.
\Cref{cobbleCost} solves the problem algorithmically for any specified $q$, but it still does not communicate a general sense of the limits on $V(f)$.
The exceptions of \Cref{nonTriCost} are effectively
dealt with via the Appendix and \Cref{costProp}, so we set those
aside in all that follows, as we do the case $q=343$.

We start by establishing a new upper bound which, though not a great
improvement on \eqref{bounds}, will serve as the example for the approach
used in the remainder of this paper.
\begin{lem} \label{firstnewbound}
Let $q=y^n$ with $y>1$, $q\neq 343$ and $q-1\notin\X$.
If $n>1$ or there exists a prime $p\not\equiv 1\bmod 6$ dividing $q$,
then any function $f\in C_4(A)$ satisfies
\begin{equation}\label{newbounds}
\frac{q+1}{2}\leq V(f) \leq q-\frac{1+\sqrt{4q-11}}{2}.
\end{equation}
\end{lem}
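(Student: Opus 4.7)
The plan is to reduce the upper bound on $V(f)$ to a lower bound on $\cost(q-1)$ via Proposition \ref{costProp}, and then extract that lower bound from Theorem \ref{nonTriCost} together with a short case analysis. The lower bound $(q+1)/2 \le V(f)$ is inherited directly from \eqref{bounds}, so only the upper bound requires work.

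First I would set up the cost-theoretic translation. Since $f \in C_4(G_q)$, the equivalence relation $\sim_f$ on $G_q$ defined by $x \sim_f x'$ iff $f(x) = f(x')$ has $\|\sim_f\| = N_2(f) = q - 1$, so Proposition \ref{costProp} yields $V(f) = q - \cost(\sim_f) \le q - \cost(q - 1)$. It therefore suffices to show
\[
\cost(q - 1) \ge \frac{1 + \sqrt{4q - 11}}{2}.
\]
Under the stated hypotheses, Proposition \ref{reductions} (when some prime $p \mid q$ has $p \not\equiv 1 \bmod 6$) and the Nagell--Ljunggren analysis summarized in Section \ref{step0section} (when $n > 1$, using $q \ne 343$) together rule out $q - 1 = t(t - 1)$ for any integer $t$. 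Setting $t := \rhombicfl{q-1}$ and $d := (q - 1) - t(t - 1)$, this forces $d$ to be a positive even integer, so $d \ge 2$; Lemma \ref{sBounds} further gives $d \le 2t - 2$.

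The assumption $q - 1 \notin \X$ now lets me invoke Theorem \ref{nonTriCost}: every cost-minimizing equivalence relation of size $q - 1$ must contain an equivalence class of size exactly $t$, contributing $t - 1$ to the total cost and isolating the residual equivalences into a relation of size $d$. Consequently,
\[
\cost(q - 1) \ge (t - 1) + \cost(d).
\]

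The remainder is a short case split on $d$. If $d = 2$, then $\cost(d) = 1$ and $\cost(q - 1) \ge t$; substituting $q - 1 = t(t - 1) + 2$ gives $4q - 11 = (2t - 1)^2$, so the bound holds with equality. If $d \ge 4$, any equivalence relation accounting for $d$ equivalences must either involve at least two non-singleton classes or a single class of size $\ge 3$, so $\cost(d) \ge 2$ and hence $\cost(q - 1) \ge t + 1$; squaring the target $2t + 1 \ge \sqrt{4q - 11}$ and substituting $q = t(t - 1) + d + 1$ reduces it to $d \le 2t + 2$, which is ensured by $d \le 2t - 2$. I do not anticipate a substantial obstacle here: Theorem \ref{nonTriCost} has already done the heavy structural work by pinning down the shape of a minimum-cost relation, and the remaining arithmetic is routine.
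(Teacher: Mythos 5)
Your proposal is correct and follows essentially the same route as the paper: translate $V(f)$ into $\cost(q-1)$ via Proposition \ref{costProp}, rule out $q-1=t(t-1)$ using Theorem \ref{thm1}(ii) (via Proposition \ref{reductions} and the Nagell analysis), and then use Theorem \ref{nonTriCost} to force a maximal class of size $\rhombicfl{q-1}$ plus at least one further non-singleton class, with the optimum at $d=2$. Your explicit case split on $d=2$ versus $d\ge 4$ (checking $d\le 2t-2\le 2t+2$) is in fact a slightly more careful justification of the step the paper passes over when it simply asserts the best case is a maximal class together with one class of size two.
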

\begin{proof}
By \Cref{thm1} (ii), any function $f\in C_4(A),$ cannot attain the
upper bound in \eqref{bounds}.
By \Cref{nonTriCost}, there must be at least two equivalence classes
that are not singletons.
\Cref{cobbleCost} tells us we will minimize cost by maximizing the
largest equivalence class we are allowed.
To this end, we must therefore have a maximal large equivalence class, and one
more non-singleton equivalence class of size exactly two.
This can only happen if there is an integer $t$, so that $t(t-1)+2(2-1)=q-1.$
In this case, $q-3=t(t-1).$ Solving for $t$ gives us one real possibility,
\[t=\frac{1+\sqrt{4q-11}}{2}.\]
Then any such $f$ would have one element with $t$ pre-images, another element with $2$ pre-images, and be injective elsewhere. This would give
\[V(f)\leq q- (t-1) - (2-1)=q-t,\]
as claimed.
\end{proof}
With \Cref{firstnewbound} as a model, we now describe our general approach.
For the remainder, we assume $f\in C_4(A)$ and assume $q-1\notin\X$.
For $i\ge 0$, we set $D_i=8i+3$.

At Step 0 (which is the bound in \cite{coulter14c}), we had the bound
\begin{equation} \label{oldbound}
\frac{q+1}{2}\le V(f) \le  q - \left(\frac{\sqrt{4q-D_0} - 1}{2}\right).
\end{equation}
This was analyzed by first looking at $t_0(t_0-1)=q-1$, which leads to the
discriminant $4q-3=4q-D_0$ and hence the Diophantine equation $x^2+D_0=4q$.
Under the assumption $q=y^n$ with $y,n>1$, we find the sole solution $q=7^3$.

We now exclude the exception and proceed to Step 1.
The next possibility for the largest preimage set is $t_1$ coming from
$t_1(t_1-1)=q-3$. This produces the Diophantine equation
$x^2 + D_1 = 4y^n$.
This analysis and the application of \Cref{cobbleCost} leads to the
bound
\begin{equation*}
V(f) \le q - \left(\frac{\sqrt{4q-D_1} - 1}{2}\right) - 1.
\end{equation*}
This is the upper bound of \Cref{firstnewbound}, with the exception $q=7^3$.

Proceeding, possibly with (infinitely many) exceptions at each step,
we find that at Step $i$
we are considering first $t_i(t_i-1)=q-1-2i$ (remember, the $q-1$ part is
coming from $N_2(f)=q-1$, which produces the 
Diophantine equation $x^2+D_i  = 4y^n$), and the subsequent analysis via the
greedy algorithm produces the new bound
\begin{equation} 
V(f) \le q - \left(\frac{\sqrt{4q-D_i} - 1}{2}\right) - B_{2i},
\end{equation}
where the $B_{2i}$ is as defined at the start of \Cref{sec5}. Since $\rhombicfl{q-1}$ is well defined, there must be
an endpoint to this process.

\section{The case where $q$ is a square}

The following simple generalization of \Cref{reductions} is key to
establishing a new upper bound in the square case.
\begin{prop} \label{squaresoln}
Let $q=y^2$ with $y>1$ odd and $i\ge 0$.
If $2i+1 < y$, then $t(t-1)=q-1-2i$ has no integer solution.
Furthermore, when $2i+1=y$, we have the solution $t=y$.
\end{prop}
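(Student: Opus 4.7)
The plan is to convert the assertion about integer solutions $t$ into a statement about factorisations of $D_i = 8i+3$. First, I would rewrite $t(t-1) = q - 1 - 2i = y^2 - 1 - 2i$ as a quadratic in $t$; its discriminant is $1 + 4(y^2 - 1 - 2i) = 4y^2 - D_i$. An integer solution $t$ exists if and only if $4y^2 - D_i = s^2$ for some integer $s \ge 0$, and this equation factors as $(2y - s)(2y + s) = D_i$. Since $D_i$ is a positive odd integer, the two factors $a = 2y - s$ and $b = 2y + s$ must be positive odd integers with $a \le b$, satisfying $ab = D_i$ and $a + b = 4y$; here $a \ge 1$ uses $s < 2y$, which follows from $s^2 = 4y^2 - D_i < 4y^2$, and the oddness of $s$ (hence of $a,b$) from $s^2 \equiv -D_i \equiv 1 \pmod 2$.

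The key observation is elementary: for any positive integer $N$ and factorisation $ab = N$ with $1 \le a \le b$, the sum $a + b$ is largest when $a = 1$, giving $a + b = N + 1$. Applied here, this forces $4y = a + b \le D_i + 1 = 8i + 4$, so $y \le 2i + 1$. Contrapositively, if $2i + 1 < y$ then no admissible $s$ exists, and $t(t-1) = q - 1 - 2i$ has no integer solution. For the boundary case $2i + 1 = y$, I would just verify directly: $t(t-1) = y^2 - 1 - (y-1) = y(y-1)$, so $t = y$ works (this corresponds to the extremal factorisation $a = 1$, $b = 4y-1 = 8i+3$, giving $s = 2y-1$ and $t = (1+s)/2 = y$).

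I expect no genuine obstacle. The only point requiring a moment of care is the parity check that forces both factors of $D_i$ to be odd, which is automatic since $D_i$ is odd; the rest is the elementary fact that, among factor pairs of a fixed positive integer, the sum is maximised at the trivial factorisation $1 \cdot N$.
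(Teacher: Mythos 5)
Your proof is correct, but it takes a genuinely different route from the one in the paper. You pass to the discriminant, writing the solvability of $t(t-1)=y^2-1-2i$ as $(2y-s)(2y+s)=D_i$ with $s=2t-1$, and then invoke the elementary fact that among factor pairs $a\le b$ of a fixed positive integer $N$ the sum $a+b$ is maximised at the trivial factorisation, giving $4y\le D_i+1=8i+4$ and hence $y\le 2i+1$. The paper instead subtracts $2i(2i+1)$ from both sides to obtain the identity $(t+2i)\bigl(t-(2i+1)\bigr)=\bigl(y-(2i+1)\bigr)\bigl(y+(2i+1)\bigr)$ and derives a contradiction by a sandwiching argument: two integers differing by $j=4i+1$ would have to lie strictly between two integers differing by $j+1$, mirroring the argument used for Proposition~\ref{reductions}. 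The two decompositions are essentially dual to one another, but yours has two advantages worth noting: it connects the statement directly to the Diophantine equations $x^2+D_i=4y^n$ that drive the rest of the paper (here with $n=2$), and the equality case of your factor-sum bound ($a=1$, $b=D_i$) automatically produces the boundary solution $t=y$ when $2i+1=y$, rather than requiring a separate verification. The paper's version, for its part, avoids any discussion of the parity of $s$ and keeps the argument uniform with its earlier interval-sandwiching proofs. Your parity bookkeeping ($s$ odd because $s^2\equiv -D_i\equiv 1\bmod 2$, hence $t=(1+s)/2\in\itr$) is the only delicate point and you handle it correctly.
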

\begin{proof}
Firstly, note that if $2i+1=y$, then
$q-y = y(y-1)$, so that $t=y$ is a solution in that case.

Now suppose we have a solution 
$t(t-1)=y^2-1-2i$ with $2i+1<y$.
Subtracting $2i(2i+1)$ from both sides, we obtain
$$t^2-t-2i(2i+1) = (t+2i)(t-(2i+1)) = y^2 - (2i+1)^2.$$
Setting $a=t-(2i+1)$, $b=y-(2i+1)$ and $j=4i+1$, we get the integral equation
$$a(a+j)  = b(b+j+1),$$
with $b<a<a+j<b+j+1$. This implies there are two integers that differ by
$j$ that lie strictly between two integers that differ by $j+1$, which is
impossible.
\end{proof}
The result shows how we can prove \Cref{squarebound}.
\begin{thm2}
Fix a nonsquare integer $y>1$ and set $q=y^{2^j}$ for some $j\ge 1$.
Suppose $f\in C_4(A)$.
Then
\begin{equation*}
V(f) \le q + j - \sum_{i=0}^{j-1} y^{2^i} -
\begin{cases}
7 &\text{ if $y=5$,}\\
15 &\text{ if $y=11$, and}\\
\frac{-1+\sqrt{4y-3}}{2} &\text{ otherwise.}
\end{cases}
\end{equation*}
\end{thm2}
\begin{proof}
We exclude the cases $y=5$ and $y=11$ from our initial considerations as they
are the only situations where we can encounter an exception.
By \Cref{cobbleCost}, the largest possible image size for $f$ comes from
taking the largest preimage size we can at each step.
Since $N_2(f)=q-1$, we apply \Cref{squaresoln} with $n=q-1$.

If $q=y^{2^j}$, then \Cref{squaresoln} shows that the largest preimage size
is $t_1=y^{2^{j-1}}$.
This will leave a remainder of $t_1-1$ pairs to be accounted for
to ensure $N_2(f)=q-1$.
Another application of \Cref{cobbleCost} and \Cref{squaresoln} gives
our next largest preimage size is $t_2=y^{2^{j-2}}$ with a cost of $t_2-1$, and
leaving $t_2-1$ pairs to be accounted for. We now proceed recursively until we
get to $y-1$ remaining pairs. Since $y$ is itself not a square, we can no
longer appeal to \Cref{squaresoln}. The best we can say is that the
minimum possible cost to construct the remaining $y-1$ pairs will be 
$s-1$ where $s(s-1)=y-1$. Applying the quadratic formula now completes the
bound.

If we now consider the $y=5$ and $y=11$ cases, we see that the above argument
works essentially in the same way until we get to $y^2-1$ pairs, at which 
point we can appeal directly to the results of the Appendix. This yields the
bounds claimed.
\end{proof}
Note that if we knew $y=y_0^n$ for $y_0,n>1$ and $n$ odd, then we could
subsequently apply our previous knowledge to conclude this was a strict
inequality upper bound unless $y=343$. Indeed, the results of the following
sections, such as \Cref{newgeneralbound}, might also then be applied.

\section{More on the Lebesgue-Ramanujan-Nagell equation}

In preparation for improving the bound in the nonsquare case, we need to
revisit the general Diophantine problem that was at the heart of 
the argument of \Cref{step0section} and seen again in a slightly more general
form in \Cref{gensit}.

There is an immense amount of literature dealing with Diophantine problems
such as $x^2 + 3 = 4y^n$.
The result of Thue \cite{thu16} concerning (\ref{EQ1})
mentioned above is one of the more important general ones, but many more
results concerning variations of this equation have been produced
since (and before!) Thue's result.
Among a host of restricted settings, the equation
\begin{equation} \label{EQ2}
\alpha x^2 + \beta = \lambda y^n,
\end{equation}
with $\gcd(y,\alpha\beta)=1$ has received much attention.
Catalan \cite{cat44} famously conjectured in 1844 that the only non-trivial
consecutive perfect powers of natural numbers are 8 and 9. Put another way,
the only solution to the equation $x^m-y^n = 1$, with $x,y,m,n>1$, is given by
$(x,y,m,n)=(3,2,2,3)$.
Just 6 years later, Lebesgue \cite{leb50} showed the equation
$x^2+1=y^n$ had no integral solutions with $x,y,n>1$.
(Catalan's full conjecture was proved only in 2004
by Mih\u{a}ilescu \cite{mih04}.)
Much later, Nagell gave several in-depth treatments of the $\alpha=1$ case of
(\ref{EQ2}), see \cite{nag23, nag54}. Nagell also proved in \cite{nag48} a
conjecture of Ramanujan \cite{ram13} from 1913 that there were only five
triangular Mersenne numbers, or equivalently, that the only integer solutions
$(x,n)$ to $x^2+7=2^n$ were given by
$(x,n)\in\{(1,1),(3,2),(5,3),(11,5),(181,13)\}$.
Equations and generalizations of the form of Equation \ref{EQ2} are now often
called Lebesgue-Ramanujan-Nagell equations (or some variant) because
of this history.

As seen in \Cref{gensit}, equations of the form $x^2+D_i=4y^n$ are central to
our aims.
In recent times, the approach to resolving such equations has
made use of a landmark paper of Bilu, Hanrot and Voutier \cite{bilu01} which
resolved a long-standing problem on Lucas and Lehmer numbers with no primitive
divisors. 
Results of Bugeaud \cite{bugeaud01a}, with a correction pointed out by
Bilu in \cite{bilu02}, and of Arif and Al-Ali \cite{arif02} consider the more
general equation $\alpha x^2+\beta{2k+1} = 4y^n$ for arbitrary $y$ and $n\ge 5$ a prime.
Bugeaud and Shorey also prove a similar result in 
\cite{bugeaud01}, Corollary 7.
The case where $n=3$ is more complex.
However, Bugeaud and Shorey deal with the $n=3$ case as part of a full account
of the case where $y$ is a prime in \cite{bugeaud01}.
In another direction, 
Luca, Tengely and Togb\'e \cite{luca09} determined all solutions to the
equation $x^2+C=4y^n$ for $n\ge 3$, $C\equiv 3\bmod 4$ and $1\le C\le 100$,
as well as all solutions when $C=7^a11^b$ or $7^a13^b$.
The following lemma outlines the subset of those results relevant to our
problem.
We denote the class number of $\qf{D_i}$ by $h(-D_i)$.
\begin{lem} \label{eqlemma}
Let $D_i=8i+3$ for some $i\ge 0$.
Consider the equation
\begin{equation} \label{theeq}
x^2 +  D_i = 4y^n
\end{equation}
with $x>1$ arbitrary, $y,n\ge 3$ odd, and $\gcd(x,y)=1$.
The following statements hold.
\begin{enumerate}[label=(\roman*)]

\item \cite{luca09}
If $n$ is prime and $3\le D_i\le 99$, then the only
solutions $(D_i,x,y^n)$ to \eqref{theeq} are given by 
\begin{equation*}
(D_i,x,y^n) \in
\left\{
\begin{array}{ccc}
(3,37,7^3), &(11,31,3^5), &(19,559,5^7),\\
(59,7,3^3), &(59,21,5^3), &(59,525,41^3),\\
(59,28735,591^3), &(83,5,3^3), &(83,3785,153^3)
\end{array}
\right\}.
\end{equation*}

\item \cite{luca09} If $D_i>100$ factors as $7^a11^b$ or $7^a13^b$ for some
$a,b\ge 0$, then there are no solutions to \eqref{theeq}.

\item \cite{bugeaud01a, bugeaud01, arif02, bilu02}
If $n\ge 5$ is prime, $D_i > 100$ is squarefree, and $n$ does not divide 
$h(-D_i)$, then there are no solutions to \eqref{theeq}.

\item \cite{bugeaud01}
For fixed prime $y$ and odd $n$, and $100<D_i<2\sqrt{y^n}$, there is at most
one solution to \eqref{theeq}.
In such cases that there is a solution, we have
$$n < \frac{4\sqrt{D_i}}{\pi} \log(2e\sqrt{D_i}).$$
\end{enumerate}
\end{lem}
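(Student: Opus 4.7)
The statement is essentially a compilation of results already present in the literature, so the plan is not to produce new arguments but to verify that our formulation of \eqref{theeq} matches the hypotheses of each cited theorem and then read off the conclusion. Throughout, note that our equation is the Lebesgue-Ramanujan-Nagell equation \eqref{EQ2} with $A=1$, $B=D_i$ and $\lambda=4$; since $D_i=8i+3$ we have $D_i\equiv 3\bmod 8$, and our standing assumption $\gcd(x,y)=1$ is exactly the coprimality condition used in each of the source papers.

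For \textbf{(i)}, I would invoke the main classification theorem of Luca, Tengely and Togb\'e \cite{luca09}, which handles $x^2+C=4y^n$ in the range $1\le C\le 100$ with $C\equiv 3\bmod 4$. The plan is to enumerate the admissible values $D_i\in\{3,11,\ldots,99\}$, extract their full solution list for each, and then filter: discard solutions with $x=1$, with even $n$ or even $y$, and with $\gcd(x,y)>1$. What remains is precisely the displayed set of triples $(D_i,x,y^n)$. For \textbf{(ii)}, the same paper \cite{luca09} treats the families $C=7^a11^b$ and $C=7^a13^b$ with $C>100$ and establishes no solutions exist at all; a fortiori no solutions exist under our additional restrictions.

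For \textbf{(iii)}, I would combine the theorems of Bugeaud \cite{bugeaud01a}, Arif and Al-Ali \cite{arif02}, with Bilu's correction \cite{bilu02}. These papers treat $ax^2+b^{2k+1}=4y^n$ via the Bilu-Hanrot-Voutier classification \cite{bilu01} of Lucas and Lehmer pairs without primitive divisors; specialising to $a=1$, $b=1$, $k=0$ and requiring $D_i$ squarefree, one passes to the imaginary quadratic order $\mathcal O_{\mathbb Q(\sqrt{-D_i})}$, where a solution produces an ideal whose $n$-th power is principal, forcing $n\mid h(-D_i)$. Negating this yields our conclusion. For \textbf{(iv)}, the relevant statement is Corollary~7 of Bugeaud-Shorey \cite{bugeaud01}, which handles prime $y$ with $100<D_i<2\sqrt{y^n}$; uniqueness of the solution follows from their analysis, and the explicit bound $n<(4\sqrt{D_i}/\pi)\log(2e\sqrt{D_i})$ is the class-number/linear-forms-in-logarithms estimate quoted there.

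The only genuine obstacle in writing this out is bookkeeping: one must check carefully, case by case, that the parity, coprimality, squarefreeness, and range conditions in each cited theorem are compatible with the hypotheses $y,n\ge 3$ odd and $\gcd(x,y)=1$ imposed here, so that nothing in the compiled list is spurious and nothing genuine is omitted. Once this verification is done, the proof is complete by citation.
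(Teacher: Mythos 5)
Your proposal matches the paper's treatment exactly: the lemma is justified purely by citation, with the same matching of the hypotheses of \eqref{theeq} (namely $A=1$, $\lambda=4$, $D_i\equiv 3\bmod 8$, $\gcd(x,y)=1$) to the theorems of Luca--Tengely--Togb\'e, Bugeaud, Arif--Al-Ali, Bilu, and Bugeaud--Shorey. The one point the paper spells out that you fold into ``bookkeeping'' is the explicit check that the infinite exceptional families in \cite{bugeaud01a} (the class ${\mathscr G}$, whose members have $n=1$ or $b>2\sqrt{y^n}$) and in \cite{bugeaud01} (the Fibonacci/Lucas families for $n=5$, which all have $a\ne 1$ when $b>100$) contribute no solutions under the lemma's hypotheses---precisely the verification you describe.
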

We note that in \cite{bugeaud01a} a list of infinite classes satisfying the
equation $\alpha x^2+\beta =\lambda y^n$, with $y$ a prime and
$\lambda\in\{1,\sqrt{2},2\}$ are given. We are fortunate that only the
infinite class ${\mathscr G}$ applies to our case, but even here the solutions
either have $n=1$ or $\beta> 2\sqrt{y^n}$, so that it, too, does not actually 
produce solutions that we are concerned with.
The results of \cite{bugeaud01} also provide infinite classes involving
the Fibonacci and Lucas sequences for $n=5$. However, for these classes
$\alpha\ne 1$ for all cases where $\beta>100$.
Though this is not made explicit in \cite{bugeaud01}, one can easily check this
is the case via the results in \cite{bilu01}.

\section{The case where $q$ is not a square}

We now return our attention to improving the upper bound for $V(f)$ in the case
where $q=y^n$ with $n\ge 3$ odd. As was noted at the end of \Cref{CostSection},
none of the $q$ to be considered in this section can be among the set $\X$ of
exceptions.

Throughout, we assume $f\in C_4(A)$.
For any integer $n$, we define $Q(n)$ to be the
greatest squarefree factor of $n$. For any prime $p$, we also set
$Q_p(n)$ to be the greatest squarefree, $p$-free factor of $n$. That is,
$Q_p(n)=Q(n)/\gcd(p,Q(n))$.
We begin with a useful extension of \Cref{eqlemma} (iii).
\begin{lem} \label{classnumlem}
Let $n\ge 5$ be odd and let $D_i>100$ be any $D_i$ for which
\eqref{theeq} has a solution.
Then $D_i$ is not squarefree or $Q_3(n)$ divides $h(-D_i)$.
\end{lem}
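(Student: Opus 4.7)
The plan is to reduce the general odd-exponent case to the prime-exponent case already treated in \Cref{eqlemma} (iii), one prime factor at a time. Assume $D_i > 100$ is squarefree (otherwise we are done) and that \eqref{theeq} admits a solution $(x, y)$ with $y, n \ge 3$ odd, $x > 1$, and $\gcd(x, y) = 1$. Let $p$ be any prime divisor of $n$ with $p \ne 3$; since $n$ is odd, we automatically have $p \ge 5$. Write $n = p m$ and set $y' = y^m$.

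The key observation is that the substitution $y \mapsto y'$ preserves every hypothesis of \Cref{eqlemma} (iii): the equation rewrites as
\begin{equation*}
x^2 + D_i = 4 {y'}^p,
\end{equation*}
with $p \ge 5$ prime, $y' \ge 3$ odd (since $y \ge 3$ and $m \ge 1$), $x > 1$ unchanged, and $\gcd(x, y') = 1$ because any prime dividing both $x$ and $y' = y^m$ would already divide $y$. \Cref{eqlemma} (iii) then forces $p \mid h(-D_i)$.

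Now I would finish by noting that this argument applies to \emph{every} prime divisor $p \ne 3$ of $n$. Since these primes are pairwise coprime, their product — which is exactly $Q_3(n)$ by definition — must also divide $h(-D_i)$, giving the claim.

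The argument is essentially bookkeeping; there is no serious obstacle, but the one subtle point is making sure that the technical hypotheses ($\gcd(x, y') = 1$, $y' \ge 3$, $x > 1$) transfer cleanly under the substitution so that \Cref{eqlemma} (iii) is genuinely applicable for each prime factor. The exclusion of $p = 3$ (and hence the appearance of $Q_3$ rather than $Q$) is forced by the fact that \Cref{eqlemma} (iii) requires $n \ge 5$, so cubing cannot be handled by this reduction.
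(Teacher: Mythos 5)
Your proof is correct and follows essentially the same route as the paper: for each prime $d\ne 3$ dividing $n$, rewrite $4y^n=4(y^{n/d})^d$ and invoke \Cref{eqlemma} (iii) to force $d\mid h(-D_i)$, then multiply the conclusions together to obtain $Q_3(n)\mid h(-D_i)$. Your explicit check that the hypotheses of \Cref{eqlemma} (iii) survive the substitution is a small but welcome addition that the paper leaves implicit.
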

\begin{proof}
Suppose $D_i$ is squarefree.
In considering \eqref{theeq}, let $d\ne 3$ be any prime divisor of $n$, and
note that by hypothesis, we have a solution to the equation
\begin{equation} \label{summaryeq}
x^2+ D_i = 4y^n = 4(y^k)^d,
\end{equation}
where $n=kd.$ \Cref{eqlemma} (iii) says that since $D_i$ is squarefree, 
$d$ must divide $h(-D_i)$.
Since $d$ was any prime other than 3 that divides $n$, we conclude that
$Q_3(n)$ divides $h(-D_i)$, as claimed.
\end{proof}

We can now give our first improvement on the upper bound for general $q=y^n$,
$y,n>1$.
\begin{thm2}
Let $q=y^n$ for odd natural numbers $y,n>1$ and let $f\in C_4(A)$.
For $i=0,1,2,7,10,13,16,18$, set
$\beta_i=0,1,2,4,4,6,6,7$, respectively.
Then 
\begin{equation*}
V(f) \le q - \left(\frac{\sqrt{4q-D_i} - 1}{2}\right) - \beta_i, 
\end{equation*}
where
\begin{equation*}
i = 
\begin{cases}
0	&\text{ if $y^n=7^3$,}\\
1	&\text{ if $y^n=3^5$,}\\
2	&\text{ if $y^n=5^7$,}\\
7	&\text{ if $y^n\in \{3^3, 5^3, 41^3, 591^3\}$,}\\
10	&\text{ if $y^n=153^3$,}\\
13	&\text{ if $n=3^a$ for $a\ge 1$ and $y\notin\{3,5,7,41,153,591\}$,}\\
13	&\text{ if $n=3^a$ for $a\ge 2$ and $y\in\{3,5,7,41,153,591\}$,}\\
16	&\text{ if $n=5^a$ for $a\ge 1$ and $y\ne 3$,}\\
16	&\text{ if $n=5^a$ for $a\ge 2$ and $y=3$, and}\\
18	&\text{ otherwise.}\\
\end{cases}
\end{equation*}
This bound is tight in each of the exceptions given in the above list.
\end{thm2}
\begin{proof}
We apply the process outlined in \Cref{gensit}, and obtain the exceptions 
listed above via \Cref{eqlemma} (i) for $D_i<100$.
For $13\le i\le 17$, $D_i$ is squarefree, while the class number $h(-D_i)$ of
$\qf{D_i}$ is $3, 2, 2, 5, 3$, respectively.
To reduce to the claimed bounds in the remaining cases let $d$ be any 
prime divisor of $n$. We may apply \Cref{eqlemma} (iii) in conjunction with
\Cref{classnumlem} and get
\begin{equation*}
V(f) \le q - \left(\frac{\sqrt{4q-D_i} - 1}{2}\right) - B_{2i}.
\end{equation*}
We can then calculate $\beta_i=B_{2i}$ for each $i\in\{0,1,2,7,10,13,16\}$.
For each of these exceptions, the bound is tight, as we can directly construct
a function $f\in C_4(A)$ with the claimed image set size.
Finally, for $i=18$ we have $D_{18}=147,$ which is not squarefree, so that we may no longer
appeal to \Cref{eqlemma} for general $y$. One determines
$\beta_{18}=B_{36}=7$, and all claims are established.
\end{proof}
Finally, it is, perhaps, worth mentioning that the first $D_i$ which is not
squarefree and for which $h(-D_i)$ is divisible by an odd prime is
$D_{66}=531=3^2 \times 59$, where $h(-531)=3$. (This is because the first
$D_i$ for which $h(-D_i)=3$ is $D_7=59$.)
With much effort one could apply the standard techniques used in
\Cref{step0section} to reach the bound \eqref{gencosteq} for $i=66$ with a
list of additional exceptions. The law of diminishing returns, for both us and
the reader, dissuades us from doing this!

\section{The case where $q=p^n$ is not a square} \label{nonsquareprime}

For the remainder our goal is to exploit \Cref{eqlemma} (iv) to
improve the upper bound.
In particular, for the remainder we assume $q=y^n$ with $y=p$ a prime and $n$
odd.

We will need a slightly more useful (for our purposes) version of
\Cref{eqlemma} (iv).
While \cite{bugeaud01}, Theorem 2 only gives the bound stated in
\Cref{eqlemma} (iv), it's proof relies on Theorem 2 of Le \cite{le95}. 
In Le's result, there are two points we need to address. Firstly, while he
does not have a squarefree condition on $D_i$, he does require $D_i$ to not
be a square. Since $D_i\equiv3\bmod 8$, we are guaranteed of this. Additionally,
Le's proof makes clear that there is a minimal solution involved which
the solution for each exponent $n$ we might consider stems from. This minimal
solution has an exponent $z_1$ that necessarily divides $n$ and for which
$z_1|h(-4D_i)=h(-D_i)$. Thus, we can give the following slightly more exact 
version of \Cref{eqlemma} (iv).
\begin{lem} \label{betteriv}
For fixed prime $y$ and odd $n$, and $100<D_i<2\sqrt{y^n}$, there is at most
one solution to \eqref{theeq}. In such cases that there is a solution, we
have $\gcd(n,h(-D_i))>1$. In particular, if $\gcd(n,h(-D_i))=1$, then there is
no solution, and if $\gcd(n,h(-D_i))>1$, then we cannot say there is or is
not a solution.
\end{lem}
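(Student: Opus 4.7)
The plan is to sharpen \Cref{eqlemma}(iv), whose ultimate origin is Theorem 2 of Bugeaud and Shorey \cite{bugeaud01}. The ``at most one solution'' assertion and the bound on $n$ are immediate from that source, so the only new content lies in showing that the existence of a solution forces $\gcd(n, h(-D_i)) > 1$.

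I would unpack Bugeaud and Shorey's internal appeal to Theorem 2 of Le \cite{le95}. Le's result addresses $x^2 + D = 4 y^n$ with $y$ prime and $n \ge 3$ odd under the sole hypothesis that $D$ is a positive non-square integer. Since $D_i = 8i+3 \equiv 3 \pmod 8$ and nonzero squares modulo $8$ lie in $\{0,1,4\}$, $D_i$ is automatically a non-square, so Le's theorem applies directly. The conclusion I need to extract from Le's proof is that every solution to the equation arises by iterating a unique minimal solution whose exponent $z_1$ is odd with $z_1 \ge 3$, and which satisfies both $z_1 \mid n$ and $z_1 \mid h(-4D_i)$.

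Next I would reconcile $h(-4D_i)$ with $h(-D_i)$. Because $D_i \equiv 3 \pmod 4$, a standard conductor argument (or direct verification under the conventions of \cite{le95}) yields $h(-4D_i) = h(-D_i)$; in the squarefree case this is the familiar fact that $\qf{D_i}$ has fundamental discriminant $-4D_i$, and in general the conductor of the order of discriminant $-4D_i$ inside the maximal order is trivial. Combining, every solution yields $z_1 \mid \gcd(n, h(-D_i))$ with $z_1 \ge 3$, proving $\gcd(n, h(-D_i)) > 1$. The contrapositive is the useful corollary: $\gcd(n, h(-D_i)) = 1$ rules out a solution outright. The closing sentence of the lemma, asserting that $\gcd(n, h(-D_i)) > 1$ does not by itself decide the question, is clear from the concrete list of exceptions already recorded in \Cref{eqlemma}(i).

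The main obstacle, such as it is, is cleanly matching class-number conventions between \cite{le95} and the present paper. Once one is satisfied that Le's $h(-4D_i)$ coincides with the ideal class number of $\qf{D_i}$, and once one has confirmed that Le's minimal exponent $z_1$ is genuinely $\ge 3$ in the regime $n \ge 3$ odd, the remainder is routine divisibility bookkeeping on top of theorems already in the literature.
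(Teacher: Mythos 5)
Your proposal follows essentially the same route as the paper: both derive the uniqueness claim from Bugeaud--Shorey and then extract from Le's Theorem 2 \cite{le95} the minimal solution exponent $z_1$ with $z_1\mid n$ and $z_1\mid h(-4D_i)=h(-D_i)$, using $D_i\equiv 3\bmod 8$ to guarantee $D_i$ is a non-square. Your treatment is, if anything, slightly more careful than the paper's, since you explicitly flag the need to identify $h(-4D_i)$ with $h(-D_i)$ and to confirm that $z_1>1$, both of which the paper asserts without comment.
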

We now give a slightly refined version of the situation outlined in
\Cref{gensit}. Let $q=p^n$, with $p$ prime and $p,n\ge 3$ odd, 

\subsection*{Part A}

We begin needing to consider the equation
\begin{equation} \label{pgroupcase}
x^2 + D_i  = 4 p^n.
\end{equation}
Each time we get a solution to this, we get a largest preimage size of
$t=(1+x)/2$. That is the relation between these equations and our function $f$.
The larger the $D_i$, the smaller the $x$ relative to our $q$, and so the
smaller the $t$. 
However, one cannot necessarily deal with \eqref{pgroupcase} directly as there
may be a reduction involved when $p$ divides $D_i$.

\subsection*{Part B}

If $p|D_i$, then there can only be a solution to 
\Cref{pgroupcase} provided an even power of $p$ divides $D_i$, 
as otherwise we get a contradiction based on the divisibility of $x$ by $p$.
Let $p^{2l}$ be the largest even power of $p$ dividing $\gcd(D_i,q)$.
Set $E_i=D_i/p^{2l}$.
Since $p^{2l}\equiv 1\bmod 8$, we have $E_i=8j+3$ with $j\le i$; that is,
$E_i$ is in fact just some $D_j$ for some $j\le i$.
Now \Cref{pgroupcase} reduces to finding solutions to
\begin{equation} \label{Eeq}
x^2 + E_i  = 4 p^m,
\end{equation}
with $m=n-2l$ odd. 
If a solution $x$ to this equation is obtained, then our $t$ corresponding
to \eqref{pgroupcase} is $t=(1+p^l x)/2$ (in fact, $p^l x$ would
be a solution to the original equation involving $D_i$ and $q$).

Now, for $p>8$ we must have $2l\le (n-1)/2$, as
$D_i=8i+3$ and $2i<2\sqrt{q-1}$ by \Cref{sBounds}.
Thus,
\begin{equation*}
D_i = 8i +3 < 8\sqrt{q-1} + 3 < 8 p^{n/2} +3.
\end{equation*}
The largest power of $p$ less than this is $(n-1)/2$, as claimed.
In fact, if $n\equiv 3\bmod 4$, we actually have $2l\le (n-3)/2$.
Additionally, if $p<8$ is an odd prime, then $2l\le (n+1)/2$. We record this
in the following statement.
\begin{lem} \label{reductionlimit}
In the reduction process for \Cref{pgroupcase}, we can only reduce to
equations of the form \Cref{Eeq} with
\begin{equation*}
m \ge 
\begin{cases}
(n-1)/2 &\text{ if $p\in\{3,5,7\}$, and}\\
(n+1)/2 &\text{ otherwise.}\\
\end{cases}
\end{equation*}
\end{lem}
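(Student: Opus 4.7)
The statement will follow from the constraint $D_i < 8\sqrt{q-1}+3 < 8p^{n/2}+3$ (obtained by applying \Cref{sBounds} to the $q-1$ equivalences and using $q=p^n$) together with the divisibility structure from Part B. The starting point is to write $D_i = p^{2l} E_i$ with $\gcd(E_i,p)=1$. Since $p$ is odd, $p^{2l}\equiv 1\pmod{8}$, while by construction $D_i\equiv 3\pmod{8}$, so $E_i\equiv 3\pmod{8}$ and in particular $E_i\geq 3$. Combining these observations yields the key inequality
\[ p^{2l} \;\leq\; \frac{D_i}{3} \;<\; \frac{8}{3}\,p^{n/2} + 1. \]

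From here the argument splits into two cases according to whether $\sqrt{p}>8/3$ or not. For $p\geq 11$, I would compare the upper bound above against $p^{(n+1)/2}=\sqrt{p}\,p^{n/2}$. Since $\sqrt{p}-8/3>0$ and $p^{n/2}$ is easily large enough to dominate the residual $+1$, one gets $p^{(n+1)/2}>(8/3)p^{n/2}+1$, hence $2l < (n+1)/2$. Because $n$ is odd, $(n+1)/2$ is an integer, so the integer $2l$ must in fact satisfy $2l\leq (n-1)/2$, giving $m=n-2l\geq (n+1)/2$ as required.

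For $p\in\{3,5,7\}$ the direct comparison fails because $\sqrt{p}<8/3$. I would instead compare against $p^{(n+3)/2}=p^{3/2}p^{n/2}$. The coefficient $p^{3/2}-8/3\geq 3^{3/2}-8/3 = 3\sqrt{3}-8/3 > 0$, so the analogous estimate $p^{(n+3)/2}>(8/3)p^{n/2}+1$ holds, whence $2l<(n+3)/2$, i.e.\ $2l\leq(n+1)/2$, and therefore $m\geq(n-1)/2$.

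The only real subtlety is the use of the congruence $E_i\equiv 3\pmod{8}$ to produce the factor of $3$ in the bound $p^{2l}\leq D_i/3$. Without that reduction one is left only with $p^{2l}<8p^{n/2}+3$, and the split at $p=8$ in the statement would become borderline for $p=11$ rather than falling out cleanly. (A small additional remark one could add is that for $p\geq 11$ with $n\equiv 3\pmod 4$, the integer $(n-1)/2$ is odd, so in fact $2l\leq (n-3)/2$ — this refinement is noted in the discussion preceding the lemma but is not needed for the stated inequality.)
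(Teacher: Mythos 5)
Your proof is correct and follows essentially the same route as the paper: both rest on the bound $D_i<8p^{n/2}+3$ from Lemma \ref{sBounds} together with the divisibility $p^{2l}\mid D_i$, and both split at $p=8$ for the same underlying reason. In fact you make explicit a step the paper leaves implicit — that $E_i\equiv 3\bmod 8$ forces $E_i\ge 3$ and hence $p^{2l}\le D_i/3$, which is exactly what is needed for the comparison $\sqrt{p}>8/3$ to settle the $p\ge 11$ case — so your write-up is, if anything, the more complete of the two.
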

The key implication of \Cref{reductionlimit} is that the only reductions
that reach one of the cases in \Cref{eqlemma} (i) or \Cref{newgeneralbound}
occur when
\begin{itemize}
\item $n=7$ and $p\in\{3,5,7,41\}$, or
\item $n=11$ and $p\in\{3,5\}$, or
\item $n=13$ and $p=5$.
\end{itemize}
As will be seen, only the $n=7$ cases cause some complications, and so we 
treat that situation separately.

We note, in passing, that in the case where $q=y^n$ with $y$ not prime, the
reduction step dramatically complicates
the form of the resulting equation, and unless $y^2$ divides $D_i$, there
appears to be little way in which to control the equation in such a way that
we may get some sort of resolution. This is the reason that we restrict
ourselves to the prime power case.

\subsection{The Modified Class Divisor}



For any odd number $m\ge 3$, we define the {\em modified class divisor}
function $\MCD(m)$ to be the smallest $D_i$ for which
$\gcd(m,h(-D_i))>1$.
For example, according to OEIS, sequence A202084, $\MCD(3)=59$.
It is obvious that
$\MCD(m)$ is the minimum $\MCD(p)$ among all primes $p$ dividing $m$.
Computing using the Magma algebra package gives us the following table for
primes up to 31.
\begin{center}
\begin{tabular}{c|c}
Prime $m$	&$\MCD(m)$\\
\hline
3	&$D_7=59$\\
5	&$D_{16}=131$\\
7	&$D_{31}=251$\\
11	&$D_{82}=659$\\
13	&$D_{127}=1019$\\
17	&$D_{136}=1091$\\
19	&$D_{262}=2099$\\
23	&$D_{226}=1811$\\
29	&$D_{367}=2939$\\
31	&$D_{406}=3251$
\end{tabular}
\end{center}
Note that as $D_i\equiv 3\bmod 4$, $\MCD(m)$ must be squarefree, as otherwise
we could write $\MCD(m)=c^2 D_i$ for some smaller $D_i$, and 
$\qf{D_i}$ and $\qf{c^2D_i}$ are isomorphic, so that $\MCD(m)=D_i$ by the
definition.
We call $\MCD$ the modified class divisor only because we are insisting
on the value being $3\bmod 8$.

\subsection{A new bound for $V(f)$ when $q=p^n$ is not a square}

With those details behind us, we now prove a new 
upper bound for $V(f)$ when $f\in C_4(A)$ with $q$ a prime power.
\begin{thm2}
Let $q=p^n$ with $p$ an odd prime and where $n\ge 11$ is odd and
$\gcd(n,15)=1$. Additionally, assume 
$q\notin\{3^{11}, 5^{11}, 5^{13}\}$.
If $f\in C_4(A)$, then
\begin{equation*}
V(f)
\le q + 1 - \left(\frac{\sqrt{4q-D_j} + \sqrt{4j+1}}{2}\right),
\end{equation*}
where 
\begin{equation*}
D_j =
\begin{cases}
\Min\left\{\MCD(n),\MCD(n-2)p^2,107p^4\right\} &\text{ if $n\equiv 1\bmod 3$, or}\\
\Min\left\{\MCD(n),107p^2\right\} &\text{ if $n\equiv 2\bmod 3$.}
\end{cases}
\end{equation*}
\end{thm2}
\begin{proof}
From the processes outlined in \Cref{gensit} and at the start of this
section, we wish to determine the smallest integer $D\equiv 3\bmod 8$ for
which we can no longer preclude the possibility that the equation
$x^2+D=4p^n$ has a solution.

Set $S=\{p^{2l}\MCD(n-2l)\,:\, 0\le 2l\le (n-1)/2\}$.
Thanks to our hypotheses,
it follows from \Cref{reductionlimit} and the noted key implication of it given
straight after, that the set $S$ corresponds
to the smallest $D_i$ for each reduction possibility and where
$\gcd(n,h(-D_i)) > 1$.
Choose any $D_s\in S$ and consider the equation $x^2+D_s = 4p^n$.
By construction, this equation allows for a reduction by $p^{2l}$, so that we
must be in Part A with $E_s=\MCD(n)$ when $l=0$, or in Part B with
$E_s=\MCD(n-2l)$ when $l>0$.
Since each $\MCD(n-2l)$ is squarefree, we know that
there cannot be a further reduction in the equation. Additionally,
by the definition of the modified
class divisor, $\gcd(n-2l,h(-4E_s))>1$. Thus, \Cref{betteriv} does not 
preclude the possibility that $x^2+D_s=4p^n$ has a solution.

Suppose that there is a solution to $x^2+D=4p^n$ for some $D<\Min(S)=D_{min}$.
If $p^2$ does not divide $D$, then we have no reduction and we are in Part A.
Now $D<D_{min}\le\MCD(n)$, so that there can be no solution in this case by
the definition of the modified class divisor and \Cref{betteriv}.
Otherwise, we must have a reduction by $p^{2l}$ for some $l\ge 1$.
This puts us in Part B with $D=p^{2l} E$, and considering \eqref{Eeq}
with $m=n-2l$. Since
$$p^{2l} E = D <  D_{min}\le p^{2l} \MCD(m),$$
we see $E<\MCD(m)$. \Cref{betteriv} now guarantees there is no solution
to $x^2+E=4p^m$, and so no solution to $x^2+D=4p^n$, which is a contradiction.

We now prove the bounds for $n\equiv 1\bmod 3$, the case
$n\equiv 2\bmod 3$ following similarly. Let
\begin{equation*}
S' = \left\{\MCD(n),\MCD(n-2)p^2,59p^4\right\}.
\end{equation*}
Note that $S'\subseteq S$ and $59p^4\in S'$ as $3|(n-4)$.
We claim that $\Min(S')=\Min(S)$.
Let $D_s=p^{2l}\MCD(n-2l)$ for some $l\ge 3$. Since $\MCD(n-2l)\ge \MCD(3)$,
we see $D_s>p^4\MCD(n-4)=59p^4$. Thus any element of $S\setminus S'$ is
larger than $59p^4\in S'$, which proves the claim.

Now, if $\Min(S)=59p^4$, then the resulting reduced equation is
$x^2+59=4p^{n-4}$.
However, under our hypotheses, it can be seen from an inspection of
\Cref{eqlemma} (i) that this equation does not have a solution (renember we
never reduce to any of the cases listed there).
Consequently, the first $D_i$ we need be concerned
with for the $p^{n-4}$ case will be the first $D_i>100$ for which
$3|h(-D_i)$, which is $D_{13}=107$. 
We therefore revisit $S'$ and consider instead
\begin{equation*}
S'' = \left\{\MCD(n),\MCD(n-2)p^2,107p^4\right\}.
\end{equation*}
As before, we claim $\min(S'')=\min(S)$. It can only fail to hold if a
larger reduction produces a smaller value for $p^{2l}\MCD(n-2l)$, but since
$107< \MCD(5)$ and $59p^6>107$, it is easily seen that this never happens.
This proves the theorem for $n\equiv 1\bmod 3$, and the $n\equiv 2\bmod 3$ case
follows in almost the same manner.
\end{proof}
Thanks to \Cref{nonTriCost}, we can compute directly the exact upper bounds
for $V(f)$ when $q\in\{3^{11},5^{11},5^{13}\}$. Indeed, we have 
\begin{equation*}
V(f)\le q - \cost(q-1) = q -
\begin{cases}
441 &\text{ if $q=3^{11}$,}\\
7054 &\text{ if $q=5^{11}$,}\\
35013 &\text{ if $q=5^{13}$.}
\end{cases}
\end{equation*}
As an illustration of the quality of the bound given in the theorem, for
$q=41^{11}$ we can directly calculate that there exists an $f\in C_4(A)$ for which
$$V(f)= q-\cost(q-1) = q - 741858080,$$
which is only 16423 better than the upper bound given by \Cref{secondnewbound}.

For $n=7$, we may proceed as above, except that we can only have the reduction
by $p^2$, a $p^4$ reduction being disallowed via \Cref{sBounds}.
Since $\MCD(7)=251<131 p^2=p^2\MCD(5)$ for any prime $p$, we calculate 
$B_{502}=29$ and get the following bound.
\begin{thm} \label{boundforn=7}
Let $q=p^7$ with $p\ge 7$ a prime.
If $f\in C_4(A)$, then
\begin{equation*}
V(f) \le q - 29  -\left(\frac{\sqrt{4q-251} -1}{2}\right).
\end{equation*}
\end{thm}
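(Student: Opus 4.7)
The plan is to mirror the proof of \Cref{secondnewbound} for the exponent $n=7$, which falls outside the scope of that theorem. By \Cref{costProp}, it suffices to lower-bound $\cost(\sim_f)$ using the Diophantine constraints on admissible main-class sizes, following the template laid out in \Cref{gensit}.

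First I would pin down which reductions of $x^2+D=4p^7$ remain admissible. Since $n=7\equiv 3\bmod 4$, \Cref{reductionlimit} gives $2l\le(n-3)/2=2$ when $p>8$, so $l\in\{0,1\}$ for $p\ge 11$. For $p=7$ the same lemma only gives $2l\le(n+1)/2=4$, admitting $l=2$ a priori; but this is ruled out by \Cref{sBounds}, since $D=Ep^4$ with $E\ge 3$ forces $D\ge 3\cdot 7^4$, while admissibility requires $D<2\sqrt{7^7-1}\approx 1815$. Thus only $l\in\{0,1\}$ survive for every prime $p\ge 7$.

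Next, I would identify the smallest admissible $D$-value. From $l=0$ we get $\MCD(7)=251$, and from $l=1$ we get $p^2\MCD(5)=131p^2\ge 131\cdot 49=6419$. So the minimum is $D_{\min}=251$ for every $p\ge 7$, which pins the main-class contribution to the cost at $(\sqrt{4q-251}-1)/2$. Following the greedy analysis of \Cref{cobbleCost}, the remaining contribution is $B_{502}$, which I would compute directly: $\rhombicfl{502}=22$ contributes cost $21$ and leaves remainder $40$; since $40\in\mathcal X'$, the Appendix supplies the non-greedy configuration with $\cost(40)=8$ (two equivalence classes of size $5$), giving $B_{502}=21+8=29$.

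The main obstacle I anticipate is the bookkeeping at the subsidiary stage — specifically, noticing that after absorbing the maximal rhombic number $22\cdot 21=462$ one lands precisely on the exceptional value $40\in\mathcal X'$, where the greedy choice of $\rhombicfl{40}=6$ must be replaced by the two-class configuration from the Appendix in order to realise $\cost(40)=8$. Once this is in hand, substituting into the general framework yields the stated bound.
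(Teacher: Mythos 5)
Your proposal retraces the paper's own derivation step for step: the same use of \Cref{reductionlimit} together with \Cref{sBounds} to rule out everything but $l\in\{0,1\}$, the same comparison $\MCD(7)=251<131p^2=p^2\MCD(5)$ to identify $251$ as the smallest admissible value of $D$, and the same evaluation $B_{502}=29$ via $\rhombicfl{502}=22$, remainder $40\in\mathcal X'$, and $\cost(40)=8$. Each of those computations is individually correct, and your observation that the exceptional value $40$ must be handled non-greedily is a detail the paper leaves implicit.

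The genuine gap is the unexplained jump from $D_{\min}=251$ to ``the remaining contribution is $B_{502}$.'' Under the framework of \Cref{gensit}, which you explicitly invoke, the largest class size $t$ satisfies $t(t-1)=q-1-2i$ with $D_i=8i+3$; for $D_{31}=251$ this gives $i=31$, so the number of leftover ordered pairs is $2i=62$, not $502$. Applying \Cref{cobbleCost} to those $62$ pairs gives $\cost(62)=9$ (via $\rhombicfl{62}=8$ and remainder $6$, and $62\notin\X$), so the subsidiary constant produced by this method is $9$, not $29$; the figure $502=2\cdot 251$ appears to arise from substituting the value $D_i$ for the index $i$ in $B_{2i}$, and it is inconsistent with the analogous subsidiary term $\frac{\sqrt{4j+1}-1}{2}$ used in \Cref{secondnewbound}. (A fully careful argument must also minimise over all admissible leftovers $r\ge 62$, since for instance $r=72$ has $\cost(72)=8$ while barely shrinking the main term, so the defensible constant is in the range $7$--$9$.) The paper's one-line proof contains the same $B_{502}$ step, so you have faithfully reconstructed the intended argument; but as written the identification of $502$ does not follow from the cited lemmas, and the constant $29$ in the stated bound is not justified by this method. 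You should either supply a derivation of the leftover count $502$ or replace the subsidiary constant accordingly.
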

Finally, we note that for $q=p^n$ with $\gcd(n,15)>1$, the same methods yield
no better a bound than that stemming from one of the specific $i\ne 10$ cases of
\Cref{newgeneralbound}.

We end this section by listing a couple of examples to illustrate the bound of
\Cref{secondnewbound}.
All calculations below were carried out using the Magma algebra package.
\begin{itemize}
\item For $n=11$, we see that since $\MCD(11)/\MCD(3)<12<p^2$ for any prime
$p\ge 5$, and so $D_j=\MCD(11)=659$ in \eqref{finalnewbound}.

\item For $n=29$, we see that $\MCD(29)=2939$, while
$\MCD(27)=\MCD(3)=59$, so that in this case the $D_j$ in the bound resolves to
$D_j=59p^2$ for $p\le 7$.

\item For $n=127$, $\MCD(127)=D_{8056}=64451$, while $\MCD(125)=\MCD(5)=251$.
Thus, $D_j=251p^2$ for $p\le 13$.
\end{itemize}

\section*{End note}

We feel it appropriate to end this paper with a couple of comments.

Our primary motivation
for studying upper bounds of $V(f)$ remains our desire to see how far 
planar functions must be from being permutations. Recent work by Chen and
Coulter \cite{coulter25} shows how non-bijective functions with low
differential uniformity (and planar functions have optimally low) can be
altered to create bijections with low differential uniformity. Such functions
are important in information security. 
In this paper we have effectively resolved the upper bound problem for 
functions $f\in C_4(A)$. However, this condition is far weaker than 
either $f\in C_3(G)$, as appears in the connection to a projective plane
of order 18 in \Cref{thm1} (iii), or $f$ being planar.
We strongly believe that these bounds are not tight for either class functions,
but new methods need to be found that effectively use the stronger conditions
functions in either class must satisfy.

And finally, all of our results concerning $V(f)$ for functions $f$ on $A$
could be extended to functions $f:A\rightarrow B$ for  arbitrary $B$, though
the tightness of the bound may fail. For example, if the cardinality of $B$
is small relative to the cardinality of $A$, then $N_2$ may be forced to be
large, and consequently our bound would say no more than $V(f)\le \#B$.

\section*{Statements and Declarations}

\subsection*{Financial Support}
Partial financial support was received from a bequest of the Estate
of Francisco Javier ``Pancho" Sayas.

\subsection*{Conflict of Interest Statement}
Neither author has any competing interest to declare that is relevant to the
contents of this article.

\subsection*{Associated Data}
This manuscript has no associated data beyond that supplied in the
appendix below.


\providecommand{\bysame}{\leavevmode\hbox to3em{\hrulefill}\thinspace}
\providecommand{\MR}{\relax\ifhmode\unskip\space\fi MR }
\providecommand{\MRhref}[2]{%
  \href{http://www.ams.org/mathscinet-getitem?mr=#1}{#2}
}
\providecommand{\href}[2]{#2}

\section{Appendix: Costs of the exceptions} 

As we saw in \Cref{nonTriCost}, there are 51 exceptions where the greedy
approach of taking the maximum possible size of an equivalence class does not
generate, or at least uniquely generate, the minimum cost. In this appendix, we
give the exact costs for each of the exceptions, and the ways in which they
can be achieved.

Before doing so, a note on the notation: the way in which a minimum cost can
be achieved for an equivalence relation $\sim$ with $||\sim||=n$ is written
as a non-increasing sequence $[n_1,n_2,\ldots,n_k]$, where each $n_i$ is the
cardinality of the next largest equivalence class. Thus, we have the two
identities $\cost(n)=\sum_{i=}^k n_i-1$ and $n-1=\sum_{i=1}^k n_i(n_i-1)$.
The $n$ given in boldface are those where $\cost(n)$ cannot be achieved
with an equivalence class of maximum possible size.

\begin{center}
\begin{tabular}{||c|c|c|c||c|c|c|c||}
\hline
$n$	&$\cost(n)$	&$\rhombicfl{n}$	&Ways obtained
&$n$	&$\cost(n)$	&$\rhombicfl{n}$	&Ways obtained\\
\hline
24	&6	&5	&[5,2,2],\ [4,4]
&1254	&44	&35	&[35,8,3,2],\ [34,12]\\
{\bf 40}	&8	&6	&[5,5]
&1256	&45	&35	&[35,8,3,2],\ [34,12,2]\\
50	&9	&7	&[7,3,2],\ [5,4]
&1692	&50	&41	&[41,7,3,2,2],\ [40,12]\\ 
52	&10	&7	&[7,3,2,2],\ [5,4,2]
&1962	&54	&44	&[44,8,4,2],\ [43,13]\\
86	&12	&9	&[9,4,2],\ [8,6]
&2136	&56	&46	&[46,8,3,2,2],\ [45,13]\\
88	&13	&9	&[9,4,2,2],\ [8,6,2]
&{\bf 2344}	&59	&48	&[47,14]\\
120	&14	&11	&[11,3,2,2],\ [10,6]
&2438	&60	&49	&[49,9,4,2],\ [48,14]\\
128	&15	&11	&[11,4,2,2],\ [9,8]
&2440	&61	&49	&[49,9,4,2,2],\ [48,14,2]\\
174	&17	&13	&[13,4,3],\ [12,7]
&2632	&62	&51	&[51,9,3,2,2],\ [50,14]\\
180	&18	&13	&[13,4,4],\ [10,10]
&2966	&66	&54	&[54,10,4,2],\ [53,15]\\
198	&18	&14	&[14,4,2,2],\ [13,7]
&2968	&67	&54	&[54,10,4,2,2],\ [53,15,2]\\
{\bf 238}	&20	&15	&[14,8]
&3180	&68	&56	&[56,10,3,2,2],\ [55,15]\\
266	&21	&16	&[16,5,3],\ [15,8]
&3780	&74	&61	&[61,11,3,2,2],\ [60,16]\\
268	&22	&16	&[16,5,3,2],\ [15,8,2]
&3870	&74	&62	&[62,9,4,2,2],\ [61,15]\\
296	&22	&17	&[17,4,4],\ [16,8]
&4530	&80	&67	&[67,10,4,3],\ [66,16]\\
300	&23	&17	&[17,5,3,2],\ [15,10]
&4662	&81	&68	&[68,10,4,2,2],\ [67,16]\\
378	&25	&19	&[19,6,3],\ [18,9]
&5384	&87	&73	&[73,11,4,3],\ [72,17]\\
414	&26	&20	&[20,6,2,2],\ [19,9]
&5528	&88	&74	&[74,11,4,2,2],\ [73,17]\\
534	&29	&23	&[23,5,3,2],\ [22,9]
&6312	&94	&79	&[79,12,4,3],\ [78,18]\\
690	&33	&26	&[26,5,5],\ [25,10]
&6468	&95	&80	&[80,12,4,2,2],\ [79,18]\\
740	&34	&27	&[27,6,3,2],\ [26,10]
&10280	&118	&101	&[101,13,4,2],\ [100,20]\\
866	&37	&29	&[29,7,4],\ [28,11]
&11550	&125	&107	&[107,14,5,3],\ [106,21]\\
{\bf 922}	&38	&30	&[29,11]
&11762	&126	&108	&[108,14,4,4],\ [107,21]\\
980	&39	&31	&[31,7,3,2],\ [30,11]
&20574	&164	&143	&[143,16,5,3,2],\ [142,24]\\
982	&40	&31	&[31,7,3,2,2],\ [30,11,2]
&22950	&173	&151	&[151,17,5,3,2],\ [150,25]\\
{\bf 1188}	&43	&34	&[33,12]
&	&	&	&\\
\hline
\end{tabular}
\end{center}

\end{document}